\newtheorem{theorem}{Theorem}
\newtheorem{lemma}{Lemma}
\newtheorem{proposition}{Proposition}
\newtheorem{corollary}{Corollary}
\newtheorem{remark}{Remark}
\newtheorem{example}{Example}
\journal{Mathematical Methods in the Applied Sciences}
\begin{document}

\begin{frontmatter}

\title{Dynamical analysis of a generalized hepatitis B epidemic model and\\ its dynamically consistent discrete model}


\author[Manh Tuan Hoang]{Manh Tuan Hoang\corref{mycorrespondingauthor}}
\cortext[mycorrespondingauthor]{Corresponding author}
\ead{tuanhm14@fe.edu.vn; hmtuan01121990@gmail.com}
\address[mymainaddress]{Department of Mathematics, FPT University, Hoa Lac Hi-Tech Park, Km29 Thang Long Blvd, Hanoi, Viet Nam}
\begin{abstract}
The aim of this work is to study qualitative dynamical properties of a generalized hepatitis B epidemic model and its dynamically consistent discrete model. Positivity, boundedness, the basic reproduction number and asymptotic stability properties of the model are analyzed rigorously. By the Lyapunov stability theory and the Poincare-Bendixson theorem in combination with the Bendixson-Dulac criterion, we show that a disease-free equilibrium point is globally asymptotically stable if the basic reproduction number $\mathcal{R}_0 \leq 1$ and a disease-endemic equilibrium point is globally asymptotically stable whenever $\mathcal{R}_0 > 1$. Next, we apply the Mickens’ methodology to propose a dynamically consistent nonstandard finite difference (NSFD) scheme for the continuous model. By rigorously mathematical analyses, it is proved that the constructed NSFD scheme preserves essential mathematical features of the continuous model for all finite step sizes. Finally, numerical experiments are conducted to illustrate the theoretical findings and to demonstrate advantages of the NSFD scheme over standard ones. The obtained results in this work not only improve but also generalize some existing recognized works.
\end{abstract}
\begin{keyword}
HBV  \sep Dynamical analysis \sep Asymptotic stability \sep NSFD schemes \sep Dynamic consistency
\MSC[2020] 		34C60, 	34D05, 37C75, 37N99 
\end{keyword}
\end{frontmatter}
\section{Introduction}\label{sec1}
Mathematical modeling and analysis of infectious diseases has been a central research field in infectious disease epidemiology with many useful applications not only in theory but also in practice \cite{Allen, Bonhoeffer, Brauer, Brauer1, Grassly, Hethcote, Kermack, Li, Martcheva, Nowak}. The well-known SIR model, proposed by Kermack and McKendrick in 1927 \cite{Kermack}, can be considered as one of the first epidemic models and is usually used to introduce epidemic modeling. Over the past of several decades, for modeling purpose, a great number of mathematical models have been studied to predict and discover transmission mechanisms of infectious diseases. As an important consequence, strategies and measures for controlling and preventing infectious diseases can be provided.\par
It is well-known that hepatitis B is a dangerous infectious disease that attacks the liver and can cause  acute and chronic diseases. Nowadays, hepatitis B has been a major global health problem. Hence, effective strategies and measures for preventing and controlling hepatitis B virus (HBV) are always needed. For this reason, a large number of mathematical models have been proposed and studied by many mathematicians and epidemiologists (see, for instance, \cite{Ahmad, Cardoso, Cardoso1, Danane, Din, Din1, Gao, Khan, Khan1, Khan2, Manna, Shah}). These mathematical models can help us predict and discover transmission mechanisms and characteristics of the HBV. Based on this, strategies and measures for projecting public health and for preventing the hepatitis B can be suggested.\par
In  \cite{Khan}, Khan et al. proposed a hepatitis B epidemic model based on HBV transmission characteristics. The model is described by a system of nonlinear differential equations
\begin{equation}\label{eq:1}
\begin{split}
&\dfrac{dS(t)}{dt} = \Lambda - \dfrac{\alpha S(t)I(t)}{1 + \gamma I(t)} - \big(\mu_0 + \nu\big)S(t),\\
&\dfrac{dI(t)}{dt} = \dfrac{\alpha S(t)I(t)}{1 + \gamma I(t)} - \big(\mu_0 + \mu_1 + \beta \big)I(t),\\
&\dfrac{dR(t)}{dt} = \beta I(t) + \nu S(t) - \mu_0 R(t),
\end{split}
\end{equation}
subject to initial data
\begin{equation*}
S(0) > 0, \quad I(0) \geq 0, \quad R(0) > 0.
\end{equation*}
In this model:
\begin{itemize}
\item the entire population, denoted by $N$, is divided into $3$ classes: susceptible ($S$), infected ($I$) and recovered or removed ($R$) classes;
\item $\Lambda$ is the birth rate;
\item $\lambda$ is the transmission rate;
\item $\mu_0$ and $\mu_1$ denote the natural and disease induced death rates, respectively;
\item $\beta$ is the recovery rate;
\item $\nu$ and $\gamma$ stand for the vaccination and saturation rates, respectively;
\end{itemize}
We refer the readers to  \cite{Khan} for more details of the model \eqref{eq:1} and its qualitative dynamical properties.  In \cite{Hoang1}, A study on the global stability and numerical solutions of the model \eqref{eq:1} was performed. Besides, Hoang and Egbelowo in \cite{Hoang} proposed and analyzed a generalized version of the model \eqref{eq:1} using the Caputo fractional derivative.\par
In the model \eqref{eq:1}, let us denote by
\begin{equation}\label{eq:2}
f(S,I) = \dfrac{\alpha S(t)I(t)}{1 + \gamma I(t)}.
\end{equation}
It is easy to verify that this function possesses the following properties:\\
(H1): $f: \mathbb{R}_+^2 \to \mathbb{R}_+$ is a differentiable function, $f(S, 0) = f(0, I) = 0$ for all $S, I \geq 0$ and $f(S, I) > 0$ for all $S, I > 0$;\\
(H2): there exists $\eta > 0$ such that $f(S, I) \leq \eta S$ for all $S, I \geq 0$;\\
(H3): $\dfrac{\partial f(S, I)}{\partial S} > 0$ and bounded for all $S \geq 0$ and $I > 0$;\\
(H4): $\dfrac{\partial f(S, I)}{\partial I} \geq 0$ for all $S, I \geq 0$;\\
(H5): $I\dfrac{\partial f(S,I)}{\partial I} - f(S, I) \leq 0$ for all $S, I \geq 0$.\\
The conditions (H1)-(H5) are biologically motivated and can be found in previous works \cite{Karaji, McCluskey, Tian}.  In a recent work \cite{Karaji}, Karaji and Nyamoradi considered the model \eqref{eq:1} in the context of the Caputo fractional-order derivative and general incidence functions. It should be emphasized that functions $f$ satisfying the conditions (H1)-(H5) include many famous incidence functions, for examples
\begin{equation*}
f(S,I) = \beta SI, \quad f(S, I) = \dfrac{\beta SI}{1 + bI}, \quad f(S, I) = \dfrac{\beta SI}{1 + aS + bI}, \quad f(S, I) = \dfrac{\beta SI}{1 + aS + bI + cSI}.
\end{equation*}
Motivated and inspired by the recognized works \cite{Karaji, McCluskey, Tian} as well as the importance of mathematical models of the HBV, in this work we consider a generalized version of the model \eqref{eq:1} by replacing the function $f$ defined by \eqref{eq:2} by general ones $f$ satisfying the conditions (H1)-(H5). More precisely, we investigate the following model
\begin{equation}\label{eq:3}
\begin{split}
&\dfrac{dS(t)}{dt} = \Lambda - f(S(t), I(t)) - \big(\mu_0 + \nu\big)S(t),\\
&\dfrac{dI(t)}{dt} = f(S(t), I(t)) - \big(\mu_0 + \mu_1 + \beta \big)I(t),\\
&\dfrac{dR(t)}{dt} = \beta I(t) + \nu S(t) - \mu_0 R(t),
\end{split}
\end{equation}
where $f(S, I)$ is any incidence function satisfying the conditions (H1)-(H5). Our main objective is to study qualitative dynamical properties and reliable numerical solutions for the model \eqref{eq:3}.\par
In the first part of this work, we analyze positivity, boundedness, the basic reproduction number, equilibria (disease-free and disease-endemic equilibrium points) and asymptotic stability properties of the model \eqref{eq:3}. By the Lyapunov stability theory and the Poincare-Bendixson theorem in combination with the Bendixson-Dulac criterion, we prove that the disease-free equilibrium (DFE) point is globally asymptotically stable if the basic reproduction number $\mathcal{R}_0 \leq 1$; and the disease-endemic equilibrium (DEE) point exists and is globally asymptotically stable whenever $\mathcal{R}_0 > 1$. Consequently, qualitative dynamical properties of the model \eqref{eq:3} are fully determined.\par
In the second part, we apply the Mickens’ methodology \cite{Mickens1, Mickens2, Mickens3, Mickens4, Mickens5} to propose a dynamically consistent nonstandard finite difference (NSFD) scheme for the model \eqref{eq:3}. By rigorously mathematical analyses, we show that the proposed NSFD scheme preserves essential mathematical features of the continuous model for all finite step sizes. In other words, the NSFD scheme behaves similarly to the continuous model regardless of chosen step sizes. It is worth noting that one of the prominent advantages of NSFD schemes is that they are able to correctly maintain important mathematical features of solutions of differential equations models (positivity, boundedness, monotonicity, stability, periodicity, physical properties, etc.) for any finite step size. Hence, NSFD schemes are effective to simulate dynamics of dynamical differential models over long time periods. Because of this, nowadays NSFD schemes have been recognized as one of the effective approaches to solve differential equation equations arising in theory and practice \cite{Adekanye, Arenas, Calatayud, Cresson, Din2, Garba, Gupta, Kojouharov, Mickens6, Patidar1, Patidar2, Wood1, Wood2}. In recent works \cite{Dang1, Dang2, Dang3, Dang4, Dang5, Dang6, Hoangnew, Hoang1}, we have successfully developed the Mickens's methodology to construct NSFD schemes for some mathematical models arising in biology and epidemiology.\par 
In the third part, we conduct numerical experiments to illustrate the theoretical findings and to demonstrate advantages of the NSFD scheme over standard ones. The numerical results show that the Euler and second-order Runge-Kutta (RK2) schemes can generate numerical approximations which destroy not only the positivity but also the asymptotic stability properties of the continuous model for some given step sizes; meanwhile, the NSFD scheme preserve these properties for the same step sizes.\par
It is worth noting that the mathematical analyses in \cite{Khan} failed to conclude the global asymptotic stability (GAS) of the DEE point of the model \eqref{eq:1}. However, we obtain the GAS of the model \eqref{eq:1} thanks to the global stability analysis for the model \eqref{eq:3} (Section \ref{sec2}). This provides an important improvement for the results in \cite{Khan}. In a recent work \cite{Suryanto}, Suryanto and Darti constructed an NSFD scheme for the model \eqref{eq:1} but the convergence and error bounds for this NSFD scheme were not established. Hence, our constructed NSFD scheme not only generalizes but also provides the convergence and error estimates for the Suryanto and Darti's scheme.\par
The plan of this work is as follows:\\
Dynamical properties of the model \eqref{eq:3} are studied in Section  \ref{sec2}. The dynamically consistent NSFD scheme is formulated and analyzed in Section \ref{sec3}. In Section \ref{sec4}, we report some numerical experiments to illustrate the theoretical results. Some discussions and remarks are provided in the last section.
\section{Dynamical analysis}\label{sec2}
In this section, dynamical qualitative properties of the model \eqref{eq:3} will be investigated. We first establish the positivity and boundedness of the model.
\begin{proposition}[Positivity and boundedness]\label{lemma1}
The set $\mathbb{R}_+^3$ is a positively invariant set of the model \eqref{eq:3}, i.e., $S(t), I(t), R(t) \geq 0$ whenever $S(0), I(0), R(0) \geq 0$. Furthermore, we have
\begin{equation}\label{eq:4}
\begin{split}
&\limsup_{t \to \infty}S(t) \leq \dfrac{\Lambda}{\mu_0 + \nu},\quad \dfrac{\Lambda}{p} \leq \limsup_{t \to \infty}\big(S(t) + I(t)\big) \leq \dfrac{\Lambda}{q},\quad \dfrac{r_1\Lambda}{p\mu_0} \leq \limsup_{t \to \infty}R(t) \leq \dfrac{r_2\Lambda}{q\mu_0},\\
&p := \max\{\mu_0 + \nu,\,\,\mu_0 + \mu_1 + \beta\}, \quad q := \min\{\mu_0 + \nu,\,\,\mu_0 + \mu_1 + \beta\}, \quad r_2 := \max\{\beta, \,\, \nu\}, \quad r_1 := \min\{\beta, \,\, \nu\}.
\end{split}
\end{equation}
\end{proposition}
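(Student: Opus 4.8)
The plan is to treat positivity and boundedness separately, and within each to exploit the structure of system \eqref{eq:3} together with hypotheses (H1)--(H2).

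For positivity, I would first check that each coordinate axis / boundary face of $\mathbb{R}_+^3$ is not crossed outward by the flow. The key observation is that on the face $S=0$ we have $dS/dt = \Lambda - f(0,I) = \Lambda > 0$ by (H1), so $S$ cannot become negative; on the face $I=0$ we have $dI/dt = f(S,0) = 0$ by (H1), so $I=0$ is invariant and, more usefully, for $I>0$ one can write $I(t) = I(0)\exp\!\big(\int_0^t [f(S(s),I(s))/I(s) - (\mu_0+\mu_1+\beta)]\,ds\big)$ using that $f(S,I)/I$ is bounded near $I=0$ by (H3) (or directly by (H2) applied after noting $f(S,I)\le \eta S$), which shows $I(t)\ge 0$; and on the face $R=0$ we have $dR/dt = \beta I + \nu S \ge 0$ once $S,I\ge0$. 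A clean way to package this is the standard barrier argument: each equation has the form $dx_i/dt = g_i(x) - x_i h_i(x)$ with $g_i \ge 0$ on $\mathbb{R}_+^3$ and $h_i$ continuous, so $\mathbb{R}_+^3$ is positively invariant. I would also invoke local existence/uniqueness (the right-hand side is differentiable by (H1)) to make the barrier argument rigorous, and note that the a priori bounds below rule out finite-time blow-up, giving global existence.

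For the bounds, write $U(t) := S(t) + I(t)$. Adding the first two equations of \eqref{eq:3}, the $f$-terms cancel and
\begin{equation*}
\dfrac{dU}{dt} = \Lambda - (\mu_0+\nu)S - (\mu_0+\mu_1+\beta)I,
\end{equation*}
so $\Lambda - p\,U \le dU/dt \le \Lambda - q\,U$ with $p,q$ as defined in \eqref{eq:4}. A standard comparison (Gronwall-type) argument then yields $\Lambda/p \le \liminf_{t\to\infty} U(t)$ and $\limsup_{t\to\infty} U(t) \le \Lambda/q$; the stated $\limsup$ bound is the upper one, and the lower constant appears in the displayed inequality chain. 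For $S$ alone, from the first equation and (H1) (nonnegativity of $f$) we get $dS/dt \le \Lambda - (\mu_0+\nu)S$, hence $\limsup_{t\to\infty} S(t) \le \Lambda/(\mu_0+\nu)$ by the same comparison lemma. Finally, for $R$, from the third equation $\beta I + \nu S \ge r_1(S+I) = r_1 U$ and $\le r_2 U$, so $r_1 U - \mu_0 R \le dR/dt \le r_2 U - \mu_0 R$; combining with the asymptotic bounds on $U$ and one more comparison argument gives $r_1\Lambda/(p\mu_0) \le \limsup_{t\to\infty} R(t) \le r_2\Lambda/(q\mu_0)$.

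I expect the only genuinely delicate point to be the positivity of $I$: because $f$ is a general function vanishing at $I=0$, one must be careful that $dI/dt$ does not force $I$ negative, which is where (H2) (or (H3)) is essential — it guarantees $f(S,I)/I$ stays bounded so the exponential representation of $I(t)$ is valid and nonnegative. Everything else reduces to the elementary comparison lemma: if $\dot w \le a - b w$ with $b>0$ then $\limsup_{t\to\infty} w(t) \le a/b$, and the analogous statement for $\liminf$ with reversed inequality. I would state this lemma once and apply it four times.
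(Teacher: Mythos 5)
Your proposal is correct and takes essentially the same route as the paper: it also checks that the vector field is inward or tangent on each boundary face of $\mathbb{R}_+^3$ and invokes a standard positivity/invariance result, then applies the comparison lemma to $S$, to $S+I$ (where the $f$-terms cancel, giving the $p,q$ bounds), and to $R$ via $r_1(S+I)\le \beta I+\nu S\le r_2(S+I)$ for $t$ large. The only (harmless) imprecision is your claim that (H2) bounds $f(S,I)/I$ near $I=0$ --- it bounds $f$, not $f/I$ --- but your exponential representation only needs $f(S,I)/I\ge 0$, which (H1) supplies, so the positivity of $I$ goes through unchanged.
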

\begin{proof}
First, the system \eqref{eq:3} implies that
\begin{equation*}
\dfrac{dS}{dt}\bigg|_{S = 0} = \Lambda > 0,\quad \dfrac{dI}{dt}\bigg|_{I = 0} = 0,\quad \dfrac{dR}{dt}\bigg|_{R = 0} = \beta I + \nu S \geq 0.
\end{equation*}
Thanks to \cite[Lemma 10]{Horvath}, the positivity of the solutions of the model \eqref{eq:3} is obtained. Hence, the set $\mathbb{R}_3^+$ is a positively invariant set of \eqref{eq:3}.\par
Next, it follows from the first equation of \eqref{eq:3} that
\begin{equation*}
\dfrac{dS}{dt} \leq \Lambda - (\mu_0 + \nu)S.
\end{equation*}
Using the basic comparison theorem for ODEs \cite{McNabb}, we obtain
\begin{equation*}
S(t) \leq \bigg(S(0) - \dfrac{\Lambda}{\mu_0 + \nu}\bigg)e^{-(\mu_0 + \nu)t} + \dfrac{\Lambda}{\mu_0 + \nu},
\end{equation*}
which implies the first estimate of \eqref{eq:4}.\par
Similarly, it follows from the first and second equation of \eqref{eq:3} that
\begin{equation*}
\Lambda - p(S + I) \leq \dfrac{d(S + I)}{dt} = \Lambda - (\mu_0 + \nu)S - (\mu_0 + \mu_1 + \beta)I \leq \Lambda - q(S + I).
\end{equation*}
Hence, the second estimate of \eqref{eq:4} is obtained.\par
Lastly, using the third equation of \eqref{eq:3} we have
\begin{equation*}
\dfrac{r_1\Lambda}{p} - \mu_0 R \leq r_1(S + I) - \mu_0 R \leq \dfrac{dR}{dt} \leq r_2(S + I) - \mu_0 R \leq \dfrac{r_2\Lambda}{q} - \mu_0 R
\end{equation*}
for $t$ large enough. Consequently, the third estimate of \eqref{eq:4} is proved. This proof is complete.
\end{proof}
\begin{proposition}[Equilibria and the basic reproduction number]\label{lemma2}
\begin{enumerate}[(i)]
\item The HBV model \eqref{eq:3} always possesses a disease-free equilibrium (DFE) point given by
\begin{equation}\label{eq:5}
E^0 = (S^0, I^0, R^0) = \bigg(\dfrac{\Lambda}{\mu_0 + \nu},\,\,0\,\,\dfrac{\nu\Lambda}{\mu_0(\mu_0 + \nu)}\bigg)
\end{equation}
for all the values of the parameters.
\item The basic reproduction number of the model \eqref{eq:3} can be computed as
\begin{equation}\label{eq:6}
\mathcal{R}_0 = \dfrac{1}{\mu_0 + \mu_1 + \beta}\dfrac{\partial f\bigg(\dfrac{\Lambda}{\mu_0 + \nu},\,\,0\bigg)}{\partial I}.
\end{equation}
\item The model \eqref{eq:3} has a unique disease-endemic equilibrium (DEE) point $E^* = (S^*, I^*, R^*)$ if and only if $\mathcal{R}_0 > 1$. Moreover, if $E^*$ exists it is given by
\begin{equation}\label{eq:7}
S^* \dfrac{\Lambda - (\mu_0 + \mu_1 + \beta)I^*}{\mu_0 + \nu}, \quad R^* = \dfrac{\beta I^* + \nu S^*}{\mu_0},
\end{equation}
where $I^*$ is a unique positive solution of the equation
\begin{equation*}
{f\bigg(\dfrac{\Lambda - (\mu_0 + \mu_1 + \beta)I}{\mu_0 + \nu},\,\, I\bigg)} - \big(\mu_0 + \mu_1 + \beta \big)I = 0. 
\end{equation*}
\end{enumerate}
\end{proposition}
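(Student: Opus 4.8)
The plan is to handle the three items in turn, with the first two being short and item (iii) carrying the real content. For item (i), I would impose $I=0$ on the equilibrium equations of \eqref{eq:3}: the second equation then holds automatically since $f(S,0)=0$ by (H1), the first reduces to $\Lambda-(\mu_0+\nu)S=0$, and the third to $\nu S-\mu_0 R=0$, and solving gives exactly \eqref{eq:5}; since no parameter restriction is used, $E^0$ exists for all parameter values. For item (ii), I would apply the next-generation matrix approach of van den Driessche and Watmough. With $I$ the only infected compartment, the new-infection term is $\mathcal{F}=f(S,I)$ and the remaining transfer term is $\mathcal{V}=(\mu_0+\mu_1+\beta)I$; linearizing at $E^0$ yields the scalars $F=\partial f(S^0,0)/\partial I$ and $V=\mu_0+\mu_1+\beta$ (here one uses $\partial f(S,0)/\partial S\equiv 0$, since $f(\cdot,0)\equiv 0$, so the $S$-direction does not interfere), hence $\mathcal{R}_0=\rho(FV^{-1})$ is precisely \eqref{eq:6}.

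For item (iii), I would first reduce the endemic equilibrium problem to a scalar equation. Adding the first two equations of \eqref{eq:3} at equilibrium gives $S=S(I):=\big(\Lambda-(\mu_0+\mu_1+\beta)I\big)/(\mu_0+\nu)$; the third gives $R=(\beta I+\nu S)/\mu_0$; and substituting into the second shows that an equilibrium with $I>0$ corresponds to a positive root of $\Phi(I):=f(S(I),I)-(\mu_0+\mu_1+\beta)I$. Nonnegativity of $S$ confines attention to $I\in(0,I_{\max}]$ with $I_{\max}:=\Lambda/(\mu_0+\mu_1+\beta)$; on $(0,I_{\max})$ I would divide by $I$ and study $h(I):=f(S(I),I)/I$, so that the task becomes showing that $h(I)=\mu_0+\mu_1+\beta$ has a unique solution in $(0,I_{\max})$ precisely when $\mathcal{R}_0>1$.

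The key step, and the main obstacle, is the strict monotonicity of $h$. Differentiating, $I^2 h'(I)=I\,\partial_S f(S(I),I)\,S'(I)+\big(I\,\partial_I f(S(I),I)-f(S(I),I)\big)$, where $S'(I)=-(\mu_0+\mu_1+\beta)/(\mu_0+\nu)<0$ and $\partial_S f>0$ on $\{S\ge 0,\,I>0\}$ by (H3), so the first summand is strictly negative, while the second is $\le 0$ by (H5); hence $h'(I)<0$ on $(0,I_{\max})$. It then remains to evaluate the endpoints: $h(I_{\max})=f(0,I_{\max})/I_{\max}=0$ by (H1), whereas $h(0^+)=\lim_{I\to 0^+}\big(f(S(I),I)-f(S(I),0)\big)/I=\partial f(S^0,0)/\partial I=(\mu_0+\mu_1+\beta)\mathcal{R}_0$, using $S(I)\to S^0$ and continuity of $\partial_I f$. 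Since $h$ is continuous and strictly decreasing from $(\mu_0+\mu_1+\beta)\mathcal{R}_0$ down to $0$, the level $\mu_0+\mu_1+\beta$ is attained exactly once iff $\mathcal{R}_0>1$; that unique value is $I^*$, the formulas \eqref{eq:7} follow by back-substitution, and $S^*=S(I^*)>0$ and $R^*>0$ hold automatically because $I^*\in(0,I_{\max})$. The delicate points are that $\partial_S f$ is strictly (not merely weakly) positive along the curve $I\mapsto(S(I),I)$, so that $h'<0$ strictly, and a careful one-sided-limit argument at $I=0$ identifying $h(0^+)$ with the partial derivative that defines $\mathcal{R}_0$.
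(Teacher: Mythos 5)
Your proposal is correct and follows essentially the same route as the paper: solve the equilibrium system directly for (i), apply the van den Driessche--Watmough next-generation method at $E^0$ for (ii), and reduce (iii) to the scalar equation $f\big(S(I),I\big)/I=\mu_0+\mu_1+\beta$, proving strict monotonicity and comparing endpoint values to get existence and uniqueness exactly when $\mathcal{R}_0>1$. If anything, your treatment of (iii) is slightly more careful than the paper's, which attributes $F'(I)<0$ to (H5) alone, whereas you correctly split $I^2h'(I)$ into the strictly negative term $I\,\partial_S f\cdot S'(I)$ coming from (H3) and the nonpositive term $I\,\partial_I f-f$ from (H5), and you also make the ``only if'' direction explicit.
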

\begin{proof}
\textbf{Proof of Part (i).} To determine equilibria of the model \eqref{eq:3}, we need to solve the nonlinear system of algebra equations
\begin{equation}\label{eq:8}
\Lambda - f(S, I) - \big(\mu_0 + \nu\big)S = 0,\quad f(S, I) - \big(\mu_0 + \mu_1 + \beta \big)I = 0, \quad \beta I + \nu S - \mu_0 R = 0.
\end{equation}
It follows from the first and second equations of the system \eqref{eq:8} that
\begin{equation*}
S = \dfrac{\Lambda - (\mu_0 + \mu_1 + \beta)I}{\mu_0 + \nu}.
\end{equation*}
Combining this with the second equation of \eqref{eq:8} we obtain
\begin{equation}\label{eq:9}
f\bigg(\dfrac{\Lambda - (\mu_0 + \mu_1 + \beta)I}{\mu_0 + \nu}, I\bigg) - \big(\mu_0 + \mu_1 + \beta \big)I = 0.
\end{equation}
Since $f(S, 0) = 0$ for all $S, I \geq 0$, the equation \eqref{eq:9} always has a trivial solution $I = 0$. Consequently, the system \eqref{eq:8} always possesses a solution $\big(\Lambda/(\mu_0 + \nu),\,\, 0,\,\, {\nu\Lambda}/{[\mu_0(\mu_0 + \nu)]}\big)$. This solution corresponds to a unique DFE point.\\
\textbf{Proof of Part (ii).} We apply the method proposed in \cite{vdDriessche} to compute the basic reproduction number of the model \eqref{eq:3}. Since the first two equations of \eqref{eq:3} do not depend on $R$, it is sufficient to consider the following sub-model
\begin{equation}\label{eq:10}
\begin{split}
&\dfrac{dS(t)}{dt} = \Lambda - f(S(t), I(t)) - \big(\mu_0 + \nu\big)S(t),\\
&\dfrac{dI(t)}{dt} = f(S(t), I(t)) - \big(\mu_0 + \mu_1 + \beta \big)I(t).
\end{split}
\end{equation}
We reorder the variables in \eqref{eq:10} as $(I, S)$. Then, the DFE point is transformed to $x_0 = (I_0, S_0)$. The model \eqref{eq:10} can be written in the matrix form
\begin{equation*}
\dfrac{dx}{dt} = \mathcal{F}(x) - \mathcal{V}(x),
\end{equation*}
where
\begin{equation*}
\mathcal{F}(x) = 
\begin{pmatrix}
f(S, I)\\
\Lambda
\end{pmatrix},
\quad \mathcal{V}(x) = 
\begin{pmatrix}
(\mu_0 + \mu_1 + \beta)I\\
f(S, I) + (\mu_0 + \nu)S
\end{pmatrix}
\end{equation*}
Consequently,
\begin{equation*}
D\mathcal{F}(x_0) = 
\begin{pmatrix}
\dfrac{\partial f}{\partial I}(S_0, 0)&0\\
&\\
0&0
\end{pmatrix},\quad
D\mathcal{V}(x) = 
\begin{pmatrix}
(\mu_0 + \mu_1 + \beta)&0\\
&\\
\dfrac{\partial f}{\partial I}(S_0, 0)&\dfrac{\partial f}{\partial S}(S_0, 0) + (\mu_0 + \nu).
\end{pmatrix}
\end{equation*}
Then, the basic reproduction number can be computed as
\begin{equation*}
\mathcal{R}_0 = \rho(FV^{-1}) = \dfrac{1}{\mu_0 + \mu_1 + \beta}\dfrac{\partial f\bigg(\dfrac{\Lambda}{\mu_0 + \nu},\,\,0\bigg)}{\partial I}.
\end{equation*}
\textbf{Proof of Part (iii)}. Return to the equation \eqref{eq:9}. We will prove that it has only a unique positive solution $I^* > 0$. Indeed, consider the equation
\begin{equation}\label{eq:11}
F(I) := \dfrac{f\bigg(\dfrac{\Lambda - (\mu_0 + \mu_1 + \beta)I}{\mu_0 + \nu},\,\, I\bigg)}{I} - \big(\mu_0 + \mu_1 + \beta \big) = 0.
\end{equation}
From the condition (H5), it is easy to verify that $F'(I) < 0$ for all $I > 0$. On the other hand,
\begin{equation*}
\begin{split}
&\lim_{I \to 0}F(I) = \dfrac{\partial f\bigg(\dfrac{\Lambda}{\mu_0 + \nu},\,\,0\bigg)}{\partial I} - ({\mu_0 + \mu_1 + \beta}) > 0,\\
&F\bigg(\dfrac{\Lambda}{\mu_0 + \mu_1 + \beta}\bigg) = - ({\mu_0 + \mu_1 + \beta}) < 0.
\end{split}
\end{equation*}
Hence, the equation \eqref{eq:11} has a positive solution $I^*$ and we obtain $(S^*, I^*, R^*)$ as in \eqref{eq:7}. In other words, the existence and uniqueness of the DEE point are proved.
\end{proof}
\begin{lemma}[Local asymptotic stability]\label{lemma3}
\begin{enumerate}[(i)]
\item The DFE point $E^0$ is locally asymptotically stable if  $\mathcal{R}_0 < 1$ and unstable if $\mathcal{R}_0 > 1$.
\item The DEE point $E^*$ is locally asymptotically stable if  $\mathcal{R}_0 > 1$ and unstable if $\mathcal{R}_0 < 1$.
\end{enumerate}
\end{lemma}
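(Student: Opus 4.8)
The plan is to linearize \eqref{eq:3} at each equilibrium and read off the signs of the eigenvalues, exploiting the fact that the $R$-equation is decoupled from the $(S,I)$-dynamics. Since $\dot R = \beta I + \nu S - \mu_0 R$ does not feed back into \eqref{eq:10}, the Jacobian of the full model at any equilibrium is block lower-triangular, and the $R$-block contributes exactly the eigenvalue $-\mu_0 < 0$. Hence it suffices to analyze the $2\times 2$ Jacobian of the planar subsystem \eqref{eq:10}, whose general form at a point $(S,I)$ is
\begin{equation*}
J(S,I) = \begin{pmatrix} -\dfrac{\partial f}{\partial S} - (\mu_0+\nu) & -\dfrac{\partial f}{\partial I} \\[2mm] \dfrac{\partial f}{\partial S} & \dfrac{\partial f}{\partial I} - (\mu_0+\mu_1+\beta) \end{pmatrix}.
\end{equation*}

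For part (i), I would evaluate $J$ at $E^0 = (S^0,0)$. Since $f(S,0)\equiv 0$, we have $\partial f/\partial S\,(S^0,0) = 0$, so $J(E^0)$ is upper triangular with diagonal entries $-(\mu_0+\nu)$ and $\partial f/\partial I\,(S^0,0) - (\mu_0+\mu_1+\beta)$. By \eqref{eq:6} the latter equals $(\mu_0+\mu_1+\beta)(\mathcal R_0 - 1)$, which is negative when $\mathcal R_0 < 1$ and positive when $\mathcal R_0 > 1$; combined with the eigenvalue $-\mu_0$, this gives local asymptotic stability for $\mathcal R_0 < 1$ and a saddle (hence instability) for $\mathcal R_0 > 1$. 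Equivalently, this is a direct application of Theorem~2 in \cite{vdDriessche}.

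For part (ii), I would apply the $2\times2$ Routh--Hurwitz test to $J(E^*)$: both eigenvalues lie in the open left half-plane iff $\operatorname{tr}J(E^*) < 0$ and $\det J(E^*) > 0$. Writing $m := \mu_0+\mu_1+\beta$, $n := \mu_0+\nu$ and abbreviating the partials evaluated at $E^*$, the equilibrium identity $f(S^*,I^*) = mI^*$ together with (H5) yields $\partial f/\partial I \le f(S^*,I^*)/I^* = m$, while (H3) gives $\partial f/\partial S > 0$. Then $\operatorname{tr}J(E^*) = -\big(\partial f/\partial S + n\big) + \big(\partial f/\partial I - m\big)$ is a sum of a strictly negative and a nonpositive term, and after cancelling the cross terms one finds $\det J(E^*) = m\,\partial f/\partial S + n\big(m - \partial f/\partial I\big) > 0$. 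Hence $E^*$ is locally asymptotically stable whenever it exists, i.e.\ whenever $\mathcal R_0 > 1$; and when $\mathcal R_0 < 1$ Proposition~\ref{lemma2}(iii) shows no DEE exists in $\mathbb R_+^3$, so the instability claim in that regime is vacuous.

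The routine parts are the two Jacobian evaluations; the one place that needs care is the determinant at $E^*$, where the $\pm(\partial f/\partial I)(\partial f/\partial S)$ contributions must be cancelled correctly so that the surviving expression can be shown nonnegative using (H5) via the equilibrium relation --- a crude termwise bound does not suffice, one must keep the grouping $n\big(m - \partial f/\partial I\big)$ intact.
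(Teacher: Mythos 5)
Your proposal is correct and follows essentially the same route as the paper: exploit the block-triangular structure to peel off the eigenvalue $-\mu_0$, read off the two eigenvalues of the linearization at $E^0$ to get stability/instability according to the sign of $\mathcal{R}_0-1$, and apply the trace--determinant (Routh--Hurwitz) criterion at $E^*$ using the equilibrium identity $f(S^*,I^*)=(\mu_0+\mu_1+\beta)I^*$ together with (H3) and (H5). Your explicit cancellation in $\det J(E^*)$ and the remark that the $\mathcal{R}_0<1$ instability claim for $E^*$ is vacuous merely spell out steps the paper leaves implicit.
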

\begin{proof}
\textbf{Proof of Part (i).} The Jacobian matrix of the system \eqref{eq:3} at $E^0$ is given by
\begin{equation*}
J(E^0) =
\begin{pmatrix}
-(\mu_0 + \nu)&-\dfrac{\partial f(E^0)}{\partial I}&0\\
&&\\
0&\dfrac{\partial f(E^0)}{\partial I} - (\mu_0 + \mu_1 + \beta)&0\\
&&\\
\nu&\beta&-\mu_0
\end{pmatrix}.
\end{equation*}
Hence, three eigenvalues of $J(E^0)$ are
\begin{equation*}
\lambda_1 = -(\mu_0 + \nu), \quad \lambda_2 = \dfrac{\partial f(E^0)}{\partial I} - (\mu_0 + \mu_1 + \beta), \quad \lambda_3 = -\mu_0.
\end{equation*}
It is clear that $\lambda_1 < 0$ and $\lambda_3 < 0$. Moreover, if $\mathcal{R}_0 < 1$ then $\lambda_3 < 0$. By the Lyapunov direct method \cite{Allen, Martcheva}, the DFE point $E^0$ is locally asymptotically stable. Conversely, if $\mathcal{R}_0 > 1$ then $\lambda_3 > 0$ and hence, $E^0$ is unstable. The proof is complete.\\
\textbf{Proof of Part (ii).}  Note that $E^*$ exists if and only if $\mathcal{R}_0 > 1$. The Jacobian matrix of the system \eqref{eq:3} at $E^*$ is given by
\begin{equation*}
J(E^*) =
\begin{pmatrix}
-\dfrac{\partial f(E^*)}{\partial S} - (\mu_0 + \nu)&-\dfrac{\partial f(E^*)}{\partial I}&0\\
&&\\
\dfrac{\partial f(E^*)}{\partial S}&\dfrac{\partial f(E^*)}{\partial I} - (\mu_0 + \mu_1 + \beta)&0\\
&&\\
\nu&\beta&-\mu_0
\end{pmatrix}.
\end{equation*}
Hence, one of three eigenvalues of $J(E^*)$ is $\lambda_1 = -\mu_0 < 0$ and two remaining eigenvalues are the ones of the sub-matrix
\begin{equation*}
\widehat{J}(E^*) =
\begin{pmatrix}
-\dfrac{\partial f(E^*)}{\partial S} - (\mu_0 + \nu)&-\dfrac{\partial f(E^*)}{\partial I}\\
&&\\
\dfrac{\partial f(E^*)}{\partial S}&\dfrac{\partial f(E^*)}{\partial I} - (\mu_0 + \mu_1 + \beta)
\end{pmatrix}.
\end{equation*}
Because $E^*$ is the positive equilibrium point we have
\begin{equation*}
\dfrac{f(S^*, I^*)}{I^*} = (\mu_0 + \mu_1 + \beta).
\end{equation*}
Hence, it follows from the hypothesis (H5) that
\begin{equation}\label{eq:12new}
\dfrac{\partial f(S^*, I^*)}{\partial I} < \dfrac{f(S^*, I^*)}{I^*} = (\mu_0 + \mu_1 + \beta).
\end{equation}
From this estimate, we obtain
\begin{equation*}\label{eq:12}
Trace(\widehat{J}) < 0, \quad \det(\widehat{J}) > 0.
\end{equation*}
Thanks to the Lyapunov stability theorem \cite{Allen, Martcheva}, the local asymptotic stability of $E^*$ is proved. This proof is completed.
\end{proof}
To end this section, we investigate the GAS of the model \eqref{eq:3}. As an consequence of Proposition \ref{lemma1}, it is sufficient to consider the GAS of the model \eqref{eq:3} on its feasible set defined by
\begin{equation*}\label{eq:13}
\Omega = \bigg\{(S, I, R)\Big|S, I, R \geq 0,\,\, 0 \leq S \leq \dfrac{\Lambda}{\mu_0 + \nu},\,\, \dfrac{\Lambda}{p} \leq S  + I \leq \dfrac{\Lambda}{q},\,\, \dfrac{r_1\Lambda}{p\mu_0} \leq R \leq \dfrac{r_2\Lambda}{q\mu_0}\bigg\},
\end{equation*}
where $p, q, r_1$ and $r_2$ are given by \eqref{eq:4}. Moreover, since the first two equations of the model \eqref{eq:3} do not depend on $R$, we only need to consider the GAS of the sub-model
\begin{equation}\label{eq:14}
\begin{split}
&\dfrac{dS}{dt} = \Lambda - f(S, I) - \big(\mu_0 + \nu\big)S,\\
&\dfrac{dI}{dt} = f(S, I) - \big(\mu_0 + \mu_1 + \beta \big)I
\end{split}
\end{equation}
on its feasible set
\begin{equation*}\label{eq:15}
\Omega^* = \bigg\{(S, I)\big|S, I \geq 0,\,\, 0 \leq S \leq \dfrac{\Lambda}{\mu_0 + \nu},\,\, \dfrac{\Lambda}{p} \leq S  + I \leq \dfrac{\Lambda}{q}\bigg\}.
\end{equation*}
On the set $\Omega^*$, the model \eqref{eq:14} always has a DFE point $\widehat{E}^0 = (S^0, 0)$ for all the values of the parameters and a DEE point $\widehat{E}^* = (S^*, I^*)$ exists if and only if $\mathcal{R}_0 > 1$. 
\begin{lemma}\label{lemma4}
Let $f(S, I)$ be a function satisfying the conditions (H1)-(H5). We define
\begin{equation}\label{eq:16}
V(S) := S - S_0 - \int_{S_0}^S \dfrac{f(S_0, I)}{f(t, I)}dt, \quad S \geq 0
\end{equation}
and
\begin{equation}\label{eq:17}
g(I) = \dfrac{f(S_0, I)}{I} - (\mu_0 + \mu_1 + \nu), \quad I > 0.
\end{equation}
Then,
\begin{enumerate}[(i)]
\item $V(S) \geq 0$ for all $S \geq 0$ and $V(S) = 0$ if and only if $S = S_0$.
\item If $\mathcal{R}_0 \leq 1$, then $g(I) < 0$ for all $I > 0$.
\end{enumerate}
\end{lemma}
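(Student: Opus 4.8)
The plan is to hold $I>0$ fixed throughout and analyse $V$ and $g$ as functions of a single variable, extracting everything from the monotonicity properties encoded in (H3) and (H5). Note first that $S_0=\Lambda/(\mu_0+\nu)>0$, so by (H1) we have $f(S_0,I)>0$ and $f(t,I)>0$ for every $t>0$, and hence the integral in \eqref{eq:16} is well defined for $S>0$; at $S=0$ the integrand behaves like a constant times $1/t$ near $t=0$ by (H3), so $V(0)=+\infty$, which does not conflict with the claimed inequality. For part (i) I would differentiate \eqref{eq:16} to get $V'(S)=1-\dfrac{f(S_0,I)}{f(S,I)}$. Since (H3) makes $t\mapsto f(t,I)$ strictly increasing, we have $f(S,I)<f(S_0,I)$ for $0<S<S_0$ and $f(S,I)>f(S_0,I)$ for $S>S_0$, so $V'<0$ on $(0,S_0)$, $V'(S_0)=0$, and $V'>0$ on $(S_0,\infty)$. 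Thus $V$ is strictly decreasing and then strictly increasing, $S_0$ is its unique global minimiser, and $V(S_0)=0$ because the integral is then taken over an empty interval; this gives $V(S)\ge 0$ with equality precisely at $S=S_0$.

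For part (ii) I would set $h(I):=f(S_0,I)/I$ for $I>0$, so that $g(I)=h(I)-(\mu_0+\mu_1+\beta)$ (the ``$\nu$'' in \eqref{eq:17} should read ``$\beta$'', consistently with \eqref{eq:6}). Differentiating, $h'(I)=\dfrac{I\,\partial f(S_0,I)/\partial I-f(S_0,I)}{I^2}\le 0$ by (H5), so $h$ is nonincreasing on $(0,\infty)$. Because $f(S_0,0)=0$ by (H1) and $f$ is differentiable, $\lim_{I\to 0^+}h(I)=\partial f(S_0,0)/\partial I$, which by \eqref{eq:6} equals $\mathcal{R}_0(\mu_0+\mu_1+\beta)$. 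Combining the two facts, $h(I)\le \partial f(S_0,0)/\partial I=\mathcal{R}_0(\mu_0+\mu_1+\beta)\le \mu_0+\mu_1+\beta$ whenever $\mathcal{R}_0\le 1$, and therefore $g(I)\le 0$ for all $I>0$.

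The two differentiations are routine; the point that needs care is the strict inequality asserted in part (ii). For the bilinear incidence $f(S,I)=\beta SI$ one has $h\equiv\beta S_0=\mathcal{R}_0(\mu_0+\mu_1+\beta)$, so $g\equiv 0$ at $\mathcal{R}_0=1$; hence the conclusion should be read as $g(I)\le 0$ (which is exactly the inequality used in the subsequent global-stability analysis), with the strict version holding either when $\mathcal{R}_0<1$ or when (H5) is strict somewhere on $(0,I)$. I would also make explicit the standing assumption that $I$ is a fixed positive parameter in \eqref{eq:16}--\eqref{eq:17}, and record the $S=0$ integrability remark noted above.
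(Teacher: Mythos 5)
Your proof is correct and follows essentially the same route as the paper: for (i) differentiate $V$ and use the strict monotonicity of $S\mapsto f(S,I)$ from (H3); for (ii) use (H5) to show $I\mapsto f(S_0,I)/I$ is nonincreasing and compare with its limit $\partial f(S_0,0)/\partial I=\mathcal{R}_0(\mu_0+\mu_1+\beta)$ as $I\to 0^+$.

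Where you differ from the paper, you are the more careful one. The paper's proof of (ii) writes $g'(I)=\partial f(S_0,I)/\partial I<0$, which is both the wrong formula for $g'$ and a strict inequality that (H5) does not supply; your computation $h'(I)=\bigl(I\,\partial_I f(S_0,I)-f(S_0,I)\bigr)/I^2\le 0$ is the correct statement. Consequently the paper's asserted strict conclusion $g(I)<0$ is not derivable from (H1)--(H5): your bilinear counterexample $f=\beta SI$ at $\mathcal{R}_0=1$ shows $g\equiv 0$ is possible, so only $g(I)\le 0$ holds in general, which is exactly what the Lyapunov argument in Theorem \ref{theorem1} needs (with LaSalle-type invariance handling the borderline case). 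Your reading of the constant in \eqref{eq:17} as $\mu_0+\mu_1+\beta$ rather than $\mu_0+\mu_1+\nu$ is also the right correction, since it is forced by the definition \eqref{eq:6} of $\mathcal{R}_0$ and by the cancellation in the derivative of $L$ in Theorem \ref{theorem1}. Your remarks that $I>0$ must be held fixed in \eqref{eq:16} and that the integral is improper (divergent) at $S=0$ address genuine gaps in the paper's wording, though they do not affect the substance of the argument.
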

\begin{proof}
\textbf{Proof of Part (i).} First, it is clear that $V(S_0) = 0$. The derivative of the function $V$ is given by
\begin{equation*}
V'(S) = 1 - \dfrac{f(S_0, I)}{f(S, I)}.
\end{equation*}
From the condition (H3) we deduce that
\begin{equation*}
\begin{split}
&V'(S) > 0\quad \mbox{for all} \quad S > S_0,\\
&V'(S) < 0\quad \mbox{for all} \quad S < S_0,\\
&V'(S) = 0\quad \mbox{if and only if}\quad S = S_0. 
\end{split}
\end{equation*}
Hence, $V(S) > V(S_0) = 0$ for all $S \ne S_0$. The proof of this part is complete.\\
\textbf{Proof of Part (ii).} It follows from the condition (H5) that
\begin{equation*}
g'(I) = \dfrac{\partial f(S_0, I)}{\partial I} < 0.
\end{equation*}
On the other hand,
\begin{equation*}
\lim_{I \to 0}g(I) = \dfrac{\partial f\bigg(\dfrac{\Lambda}{\mu_0 + \nu},\,\,0\bigg)}{\partial I} - (\mu_0 + \mu_1 + \nu) = (\mu_0 + \mu_1 + \nu)(\mathcal{R}_0 - 1) \leq 0.
\end{equation*}
Therefore, we conclude that $g(I) < (\mu_0 + \mu_1 + \nu)(\mathcal{R}_0 - 1) \leq 0$. This completes the proof.
\end{proof}
\begin{theorem}\label{theorem1}
The DFE point $\widehat{E^0}$ of the model \eqref{eq:14} is globally asymptotically stable if $\mathcal{R}_0 \leq 1$.
\end{theorem}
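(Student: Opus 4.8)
The plan is to establish global attractivity of $\widehat{E}^0$ via a Lyapunov--LaSalle argument on the compact feasible region $\Omega^*$, and then to combine attractivity with the local asymptotic stability already proved in Lemma \ref{lemma3}(i) to obtain global asymptotic stability. The crucial structural fact, built into the definition of $\Omega^*$, is that every state there satisfies $0 \le S \le \Lambda/(\mu_0+\nu) = S^0$; by the monotonicity of $f$ in its first argument (condition (H3)) this gives $f(S,I) \le f(S^0,I)$ throughout $\Omega^*$. (The auxiliary function $V$ of Lemma \ref{lemma4}(i) plays no role here --- it is tailored to the endemic equilibrium.)

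Concretely, I would use the Lyapunov candidate $L(S,I) := I$, which is nonnegative on $\Omega^*$ and vanishes exactly on $\{I = 0\}$. Differentiating along solutions of \eqref{eq:14} and invoking $f(S,I) \le f(S^0,I)$ together with the function $g$ from \eqref{eq:17},
\[
\dot L = f(S,I) - (\mu_0+\mu_1+\beta)I \;\le\; f(S^0,I) - (\mu_0+\mu_1+\beta)I \;=\; I\, g(I).
\]
By Lemma \ref{lemma4}(ii), $g(I) \le 0$ for all $I > 0$ when $\mathcal{R}_0 \le 1$, so $\dot L \le 0$ on $\Omega^*$; hence LaSalle's invariance principle applies and every solution tends to the largest invariant set $M$ contained in $\{(S,I) \in \Omega^* : \dot L = 0\}$. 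On $M$ the variable $I$ is constant, say $I \equiv I_*$: if $I_* > 0$ then $f(S(t),I_*) = (\mu_0+\mu_1+\beta)I_* = f(S^0,I_*)$ for all $t$, which by strict monotonicity in (H3) forces $S(t) \equiv S^0$ and hence $\dot S = -f(S^0,I_*) < 0$, a contradiction; therefore $I_* = 0$, and on $\{I=0\}$ the flow reduces to $\dot S = \Lambda - (\mu_0+\nu)S$, whose only invariant subset of $\Omega^*$ is $\{\widehat{E}^0\}$. Thus $M = \{\widehat{E}^0\}$, all trajectories converge to $\widehat{E}^0$, and together with Lemma \ref{lemma3}(i) this proves global asymptotic stability. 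When $\mathcal{R}_0 < 1$ the LaSalle step can be bypassed, since the sharper bound from Lemma \ref{lemma4}(ii) gives $\dot L \le (\mu_0+\mu_1+\beta)(\mathcal{R}_0 - 1)\,I$, so $I(t) \to 0$ exponentially and $S(t) \to S^0$ then follows from the limiting $S$-equation.

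The only genuinely delicate point is the boundary case $\mathcal{R}_0 = 1$ --- notably for incidence functions such as $f = \beta SI$, for which (H5) holds with equality --- because there $\dot L$ may vanish on a set strictly larger than $\{I=0\}$, and moreover Lemma \ref{lemma3}(i) does not cover $\mathcal{R}_0 = 1$ (a zero eigenvalue appears at the DFE). Lyapunov stability in that case must be obtained by hand: since $\dot L \le 0$, $I(t)$ is nonincreasing, so it stays small when it starts small, which via the $S$-equation keeps $S(t)$ close to $S^0$; together with the attractivity from the LaSalle step this yields global asymptotic stability for $\mathcal{R}_0 = 1$ as well. A routine preliminary, which I would settle first, is that $\Omega^*$ (or a marginally enlarged compact set) is forward invariant and globally absorbing for \eqref{eq:14}; this follows from Proposition \ref{lemma1} together with the sign condition $\dot S = -f(S^0,I) \le 0$ on the face $\{S = S^0\}$.
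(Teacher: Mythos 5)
Your argument is correct, but it is not the route taken in the paper, and the comparison is instructive. The paper works with the Volterra-type Lyapunov function $L(S,I)=V(S)+I$ from \eqref{eq:18}, where $V$ is the integral expression \eqref{eq:16}; the cross term $\bigl(1-f(S_0,I)/f(S,I)\bigr)(\mu_0+\nu)(S_0-S)$ is then nonpositive by (H3) on the whole positive quadrant, so no restriction to $\{S\le S^0\}$ is needed, and the conclusion is drawn directly from ``the Lyapunov direct method.'' You instead take the elementary function $L=I$, pay for it by first establishing that $\{S\le S^0\}$ (or $\Omega^*$) is forward invariant and absorbing, and then finish with LaSalle's invariance principle; your identification of the largest invariant set in $\{\dot L=0\}$ (constant $I_*>0$ forces $f(S(t),I_*)=f(S^0,I_*)=(\mu_0+\mu_1+\beta)I_*$, hence $S\equiv S^0$ and $\dot S=-f(S^0,I_*)<0$, a contradiction) is sound, using only the weak inequality $g\le 0$ rather than the strict one asserted in Lemma \ref{lemma4}(ii). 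Your version buys two things the paper's write-up glosses over: it avoids the delicate point that $V$ in \eqref{eq:16} actually depends on $I$ through the integrand (so that computing $dL/dt$ with $\partial L/\partial S=1-f(S_0,I)/f(S,I)$ and $\partial L/\partial I=1$ silently drops a $\partial V/\partial I$ term unless $f(S_0,I)/f(t,I)$ is $I$-independent), and it explicitly treats the borderline case $\mathcal{R}_0=1$, where $\dot L$ can vanish off $\{I=0\}$ (e.g.\ $f=\beta SI$), Lemma \ref{lemma4}(ii) is not strictly valid, and Lemma \ref{lemma3}(i) gives no local stability. What the paper's approach buys, when the incidence ratio is $I$-free, is a global negative-semidefinite estimate without any absorbing-set preliminaries. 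One caution on your own text: at $\mathcal{R}_0=1$ do not run the estimate on a ``marginally enlarged'' set $\{S\le S^0+\epsilon\}$, since $f(S^0+\epsilon,I)/I-(\mu_0+\mu_1+\beta)$ need not be $\le 0$ there; work on the exactly invariant set $\{S\le S^0\}$ (where $\dot S\le 0$ on the face $S=S^0$) or apply the argument on omega-limit sets, which Proposition \ref{lemma1} places inside that set.
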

\begin{proof}
Consider a Lyapunov function given by
\begin{equation}\label{eq:18}
L(S, I) = V(S) + I,
\end{equation}
where $V(S)$ is defined by \eqref{eq:16}. The derivative of the function $L$ along solutions of \eqref{eq:14} is given by
\begin{equation*}
\begin{split}
\dfrac{dL}{dt} = \dfrac{dL}{dS}\dfrac{dS}{dt} + \dfrac{dL}{dI}\dfrac{dI}{dt}& = \bigg(1 - \dfrac{f(S_0, I)}{f(S, I)}\bigg)\Big[\Lambda - f(S, I) - \big(\mu_0 + \nu\big)S\Big] + \Big[f(S, I) - \big(\mu_0 + \mu_1 + \beta \big)I\Big]\\
&=  \bigg(1 - \dfrac{f(S_0, I)}{f(S, I)}\bigg)\Big[\Lambda - \big(\mu_0 + \nu\big)S\Big] + \Big[f(S_0, I)- \big(\mu_0 + \mu_1 + \beta \big)I\Big]\\
&= \bigg(1 - \dfrac{f(S_0, I)}{f(S, I)}\bigg)\big(\mu_0 + \nu\big)\big(S_0 - S\big) + Ig(I),
\end{split}
\end{equation*}
where the function $g(I)$ is defined by \eqref{eq:17}.\par
Combining the properties of the function $f(S, I)$ and Lemma \ref{lemma2}, we deduce that the function $L$ given by \eqref{eq:18} satisfies the Lyapunov direct method \cite{LaSalle, Lyapunov}. So, the GAS of $\widehat{E^0}$ is obtained. The proof is complete.
\end{proof}
\begin{theorem}\label{theorem2}
Suppose that $\mathcal{R}_0 > 1$. Then, the DEE point $\widehat{E^*}$ of the model \eqref{eq:14} is globally asymptotically stable whenever $I(0) > 0$.
\end{theorem}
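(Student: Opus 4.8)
The plan is to use the fact that, once the decoupled $R$-equation is dropped, \eqref{eq:14} is a planar system on the compact, convex (hence simply connected) forward-invariant set $\Omega^*$, so the Poincar\'e--Bendixson theory applies. Write the right-hand sides as $P(S,I)=\Lambda-f(S,I)-(\mu_0+\nu)S$ and $Q(S,I)=f(S,I)-(\mu_0+\mu_1+\beta)I$. Since $\{I=0\}$ is invariant (because $f(S,0)=0$ by (H1)) and $\dot S=\Lambda>0$ on $\{S=0\}$, every trajectory with $I(0)>0$ enters and stays in the open region $D=\{(S,I)\in\Omega^*:\ S>0,\ I>0\}$, on which $f>0$ by (H1); moreover $\widehat{E^*}\in D$ by Proposition \ref{lemma2}.

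First I would rule out periodic orbits, homoclinic loops and separatrix polycycles inside $D$ by the Bendixson--Dulac criterion applied with the Dulac function $B(S,I)=1/f(S,I)$, which is well defined and $C^1$ on $D$. A direct computation gives
\begin{equation*}
\frac{\partial(BP)}{\partial S}+\frac{\partial(BQ)}{\partial I}=-\frac{\mu_0+\nu}{f}-\frac{\big(\Lambda-(\mu_0+\nu)S\big)\,\partial_S f}{f^{2}}-\frac{(\mu_0+\mu_1+\beta)\big(f-I\,\partial_I f\big)}{f^{2}}.
\end{equation*}
On $D$ one has $f>0$, $\partial_S f>0$ by (H3), $f-I\,\partial_I f\ge 0$ by (H5), and $\Lambda-(\mu_0+\nu)S\ge 0$ because $S\le\Lambda/(\mu_0+\nu)$ throughout $\Omega^*$; hence the right-hand side is strictly negative on all of $D$. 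By Dulac's criterion there is no closed orbit and no separatrix cycle contained in $D$.

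Next I would invoke Poincar\'e--Bendixson: for any initial datum with $I(0)>0$ the orbit is bounded (Proposition \ref{lemma1}), so its $\omega$-limit set $\omega$ is a nonempty, compact, connected, invariant subset of $\Omega^*$ that contains no periodic orbit and no polycycle inside $D$; consequently $\omega$ must reduce to an equilibrium. The only equilibria of \eqref{eq:14} in $\Omega^*$ are $\widehat{E^0}$ and $\widehat{E^*}$. To exclude $\widehat{E^0}\in\omega$: when $\mathcal{R}_0>1$, Lemma \ref{lemma3} gives that $\widehat{E^0}$ is a hyperbolic saddle, and the eigenvector associated with the negative eigenvalue $-(\mu_0+\nu)$ points along the $S$-axis, so that $W^{s}(\widehat{E^0})$ lies in the invariant line $\{I=0\}$; a Butler--McGehee type argument then shows that $\widehat{E^0}\in\omega$ with $\omega\neq\{\widehat{E^0}\}$ would force $\omega$ to contain a point $(S_1,0)$ with $S_1\neq S^0$ whose full two-sided orbit escapes $\Omega^*$, contradicting invariance of $\omega$; and $\omega=\{\widehat{E^0}\}$ is impossible since $\widehat{E^0}$ is unstable. (Equivalently, one establishes uniform persistence of \eqref{eq:14} relative to $\{I=0\}$, whose boundary flow converges to the single point $\widehat{E^0}$ and is therefore acyclic.) Hence $\omega=\{\widehat{E^*}\}$, i.e.\ every trajectory with $I(0)>0$ tends to $\widehat{E^*}$; combined with the local asymptotic stability of $\widehat{E^*}$ from Lemma \ref{lemma3}, this yields global asymptotic stability.

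I expect the main obstacle to be the sign analysis in the Bendixson--Dulac step — in particular choosing the Dulac multiplier so that only the qualitative hypotheses (H3), (H5) and the a priori bound $S\le\Lambda/(\mu_0+\nu)$ are needed — together with the careful exclusion of $\widehat{E^0}$ from the $\omega$-limit sets of interior orbits, which requires the saddle/Butler--McGehee (or persistence) argument rather than a bare appeal to instability.
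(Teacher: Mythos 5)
Your proposal is correct and follows the same skeleton as the paper's proof: rule out closed orbits by a Bendixson--Dulac argument, then apply Poincar\'e--Bendixson to the bounded planar orbits and identify the $\omega$-limit set with $\widehat{E^*}$. The differences are in the details. The paper uses the Dulac multiplier $D(S,I)=1/I$, for which the divergence $\partial_S(DF)+\partial_I(DG)=-\tfrac{1}{I}\partial_S f-\tfrac{\mu_0+\nu}{I}+\partial_I\bigl(f/I\bigr)$ is negative on the whole open quadrant using only (H3) and (H5); your multiplier $1/f$ also works, but its sign analysis additionally needs $\Lambda-(\mu_0+\nu)S\ge 0$, i.e.\ a restriction to the invariant region $S\le \Lambda/(\mu_0+\nu)$ (which is harmless, since any periodic orbit must lie there, but it is an extra step the paper's choice avoids). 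Conversely, your treatment of the boundary equilibrium is more complete than the paper's: the paper disposes of $\widehat{E^0}$ with the phrase ``unstable saddle,'' whereas you correctly observe that instability of a saddle alone does not preclude interior orbits converging to it, identify $W^s(\widehat{E^0})$ with the invariant line $\{I=0\}$, and run the Butler--McGehee argument to exclude $\widehat{E^0}$ from $\omega$-limit sets of orbits with $I(0)>0$, finally combining attractivity with the local stability from Lemma \ref{lemma3}. So your write-up is, if anything, a more rigorous rendering of the paper's argument, at the cost of a slightly less economical Dulac function.
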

\begin{proof}
Note that if $I(0) = 0$ then $\big(S(t), I(t)\big)$ will converge to the DFE point $\widehat{E^0}$ as $t \to \infty$. We first use the Dulac-Bendixson criterion \cite[Theorem 3.6]{Martcheva} to show that the system \eqref{eq:14} has no periodic orbits or graphics in $\mathbb{R}_2^+$. Indeed, let us  denote by $F(S, I)$ and $G(S, I)$ be the right-hand side functions of \eqref{eq:14}. Consider a function 
\begin{equation*}
D(S, I) = \dfrac{1}{I}.
\end{equation*}
We have
\begin{equation*}
\begin{split}
\dfrac{\partial(DF)}{\partial S} + \dfrac{\partial(DG)}{\partial I} &= \dfrac{\partial}{\partial S}\bigg[\dfrac{\Lambda}{I} - \dfrac{f(S, I)}{I} - \dfrac{(\mu_0 + \nu)S}{I}\bigg] + \dfrac{\partial}{\partial I}\bigg[\dfrac{f(S, I)}{I} - (\mu_0 + \mu_1 + \beta)\bigg]\\
&= -\dfrac{1}{I}\dfrac{\partial f(S, I)}{S} - \dfrac{\mu_0 + \nu}{I} + \dfrac{\partial}{\partial I}\bigg(\dfrac{f(S, I)}{I}\bigg).
\end{split}
\end{equation*}
It follows from the conditions (H3) and (H5) that 
\begin{equation*}
\dfrac{\partial(DF)}{\partial S} + \dfrac{\partial(DG)}{\partial I} < 0.
\end{equation*}
Hence, the system \eqref{eq:14} has no periodic orbits or graphics in the open first quadrant.\par
Since all solutions of the model \eqref{eq:14} are bounded and the DFE point $\widehat{E^0}$ is unstable saddle if $\mathcal{R}_0 > 1$, using Poincar\'e-Bendixson theorem \cite[Theorem 3.5]{Martcheva} we obtain the GAS of the DEE point $\widehat{E^*}$. The proof is complete.
\end{proof}
From Theorems \ref{theorem1} and \ref{theorem2}, we obtain the GAS of the model \eqref{eq:3} as follows.
\begin{theorem}\label{theorem3}
\begin{enumerate}[(i)]
\item The DFE point ${E^0}$ of the model \eqref{eq:3} is not only locally asymptotically stable but also globally asymptotically stable if $\mathcal{R}_0 \leq 1$.
\item The DEE point ${E^*}$ of the model \eqref{eq:3} is not only locally asymptotically but also globally asymptotically stable provided that $\mathcal{R}_0 > 1$ and $I(0) > 0$.
\end{enumerate}
\end{theorem}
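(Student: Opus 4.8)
The plan is to deduce Theorem~\ref{theorem3} from Theorems~\ref{theorem1}--\ref{theorem2} together with Lemma~\ref{lemma3}, by regarding the third equation of \eqref{eq:3} as a scalar linear equation slaved to the planar subsystem \eqref{eq:14}. Local asymptotic stability of $E^0$ (for $\mathcal R_0<1$) and of $E^*$ (for $\mathcal R_0>1$) is already contained in Lemma~\ref{lemma3}, so the only thing left is global attractivity of the relevant equilibrium in the full three-dimensional state space; combined with Lyapunov stability this yields the asserted GAS.

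First I would fix an arbitrary solution with $\big(S(0),I(0),R(0)\big)\in\mathbb R_+^3$ (and $I(0)>0$ in part~(ii)). By Proposition~\ref{lemma1} it is positive and bounded, and its $(S,I)$-components satisfy \eqref{eq:14}. Hence Theorem~\ref{theorem1} gives $\big(S(t),I(t)\big)\to(S^0,0)$ when $\mathcal R_0\le 1$, and Theorem~\ref{theorem2} gives $\big(S(t),I(t)\big)\to(S^*,I^*)$ when $\mathcal R_0>1$ and $I(0)>0$. Consequently the forcing term in the $R$-equation converges, $\beta I(t)+\nu S(t)\to c$, with $c=\nu S^0$ in the DFE case and $c=\beta I^*+\nu S^*$ in the DEE case.

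Next I would prove $R(t)\to c/\mu_0$. By the variation-of-constants formula,
\begin{equation*}
R(t)=R(0)e^{-\mu_0 t}+\int_0^t e^{-\mu_0(t-s)}\big(\beta I(s)+\nu S(s)\big)\,ds .
\end{equation*}
Given $\varepsilon>0$, pick $T$ with $|\beta I(s)+\nu S(s)-c|<\varepsilon$ for $s\ge T$; splitting the integral at $T$ and letting $t\to\infty$ gives $\limsup_{t\to\infty}\big|R(t)-c/\mu_0\big|\le\varepsilon/\mu_0$, whence $R(t)\to c/\mu_0$ since $\varepsilon$ is arbitrary. (One may equally use the comparison theorem for ODEs, exactly as in the proof of Proposition~\ref{lemma1}, or invoke the theory of asymptotically autonomous systems.) By \eqref{eq:5}, $c/\mu_0=R^0$ in the DFE case; by \eqref{eq:7}, $c/\mu_0=R^*$ in the DEE case. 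Therefore every admissible solution converges to $E^0$ (resp.\ $E^*$), which is precisely global attractivity; combined with Lemma~\ref{lemma3} this settles part~(i) for $\mathcal R_0<1$ and all of part~(ii). For the borderline value $\mathcal R_0=1$, the cascade structure (a Lyapunov-stable planar subsystem driving an exponentially stable scalar equation) keeps $E^0$ Lyapunov stable, and with the global attractivity already established this gives GAS of $E^0$.

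The argument is a triangular-system (cascade) reduction, so the only point genuinely requiring care — and what I would single out as the main obstacle — is justifying rigorously that convergence of $(S,I)$ forces convergence of $R$ without introducing spurious $\omega$-limit behaviour; this is precisely what the variation-of-constants estimate above (equivalently, the asymptotically autonomous systems machinery) secures, after which nothing further is needed.
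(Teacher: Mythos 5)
Your proposal is correct and follows essentially the same route as the paper, which states Theorem~\ref{theorem3} as an immediate consequence of Theorems~\ref{theorem1} and \ref{theorem2} via the triangular structure of \eqref{eq:3} (the $R$-equation being slaved to the planar subsystem \eqref{eq:14}). You simply make explicit what the paper leaves implicit --- the variation-of-constants limit $R(t)\to(\beta I^\ast+\nu S^\ast)/\mu_0$ (resp.\ $\nu S^0/\mu_0=R^0$) and the Lyapunov-stability bookkeeping at $\mathcal{R}_0=1$ --- which is a faithful, slightly more rigorous rendering of the paper's argument.
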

\section{Construction of dynamically consistent NSFD scheme}\label{sec3}
In this section, we construct and analyze a dynamically consistent NSFD scheme for the model \eqref{eq:3}. For this purpose, consider the model \eqref{eq:3} on a finite time interval $[0, T]$ and discretize this interval by
\begin{equation*}
0 = t_0 < t_1 < \ldots < t_{N - 1} < t_N = T,
\end{equation*}
where $t_{n + 1} - t_n = \Delta t$ for $0 \leq n \leq N - 1$. Let us denote by $(S_n, I_n, R_n)$ the intended approximation for $\big(S(t_n), I(t_n), R(t_n)\big)$ for $n = 1, 2, \ldots, N $. Applying the Mickens' methodology \cite{Mickens1, Mickens2, Mickens3, Mickens4, Mickens5} we approximate the first derivatives in the model \eqref{eq:3} by
\begin{equation}\label{eq:20}
\dfrac{dS}{dt}\bigg|_{t = t_n} \approx \dfrac{S_{n + 1} - S_n}{\varphi(\Delta t)},\qquad \dfrac{dI}{dt}\bigg|_{t = t_n} \approx \dfrac{I_{n + 1} - I_n}{\varphi(\Delta t)},\qquad \dfrac{dR}{dt}\bigg|_{t = t_n} \approx \dfrac{R_{n + 1} - R_n}{\varphi(\Delta t)},
\end{equation}
where $\varphi$, called the denominator function, is a positive function satisfying $\varphi(\Delta t) = \Delta t + \mathcal{O}(\Delta t^2)$ as $\Delta t \to 0$. Next, we discretize the right-hand side function of the model \eqref{eq:3} as follows. Let us denote
\begin{equation}\label{eq:21}
f^*(S, I) = 
\begin{cases}
&\dfrac{f(S, I)}{S},\quad S > 0,\\
&0,\quad S = 0.
\end{cases}
\end{equation}
Then, $f(S, I) = Sf^*(S, I)$. Moreover, it follows from the conditions (H1) and (H5) that $f^*(S, I)$ is differentiable for $S, I \geq 0$. In particular,
\begin{equation}\label{eq:21a}
\begin{split}
&f^*(S^0, I^0) = f^*(S^*, I^*) = 0,\\
&\dfrac{\partial f^*}{\partial S}(S_0, 0) = 0,\quad \dfrac{\partial f^*}{\partial I}(S_0, 0) = \dfrac{1}{S_0}\dfrac{\partial f}{\partial I}(S_0, 0),\\
&\dfrac{\partial f^*}{\partial S}(S^*, I^*) = \dfrac{1}{S^*}\dfrac{\partial f}{\partial S}(S^*, I^*), \quad \dfrac{\partial f^*}{\partial I}(S^*, I^*) = \dfrac{1}{S^*}\dfrac{\partial f}{\partial I}(S^*, I^*).
\end{split}
\end{equation}
We now discretize the right-hand side function of \eqref{eq:3} by
\begin{equation}\label{eq:22}
\begin{split}
&\Lambda - f(S(t_n), I(t_n)) - \big(\mu_0 + \nu\big)S(t_n) \approx \Lambda - S_{n+1}f^*(S_n, I_n) -\big(\mu_0 + \nu\big)S_{n+1} \\
&f(S(t_n), I(t_n)) - \big(\mu_0 + \mu_1 + \beta \big)I(t_n) \approx S_{n+1}f^*(S_n, I_n) - \big(\mu_0 + \mu_1 + \beta \big)I_{n + 1},\\
&\beta I(t_n) + \nu S(t_n) - \mu_0 R(t_n) \approx \beta I_{n + 1} + \nu S_{n + 1} - \mu_0 R_{n + 1}.
\end{split}
\end{equation}
The systems \eqref{eq:20} and \eqref{eq:22} lead to the the following NSFD model for the model \eqref{eq:3}
\begin{equation}\label{eq:23}
\begin{split}
&\dfrac{S_{n + 1} - S_n}{\varphi(\Delta t)} = \Lambda - S_{n+1}f^*(S_n, I_n) -\big(\mu_0 + \nu\big)S_{n+1},\\
&\dfrac{I_{n + 1} - I_n}{\varphi(\Delta t)} = S_{n+1}f^*(S_n, I_n) - \big(\mu_0 + \mu_1 + \beta \big)I_{n + 1},\\
&\dfrac{R_{n + 1} - R_n}{\varphi(\Delta t)} = \beta I_{n + 1} + \nu S_{n + 1} - \mu_0 R_{n + 1}.
\end{split}
\end{equation}
\begin{theorem}\label{theorem4}
The set $\mathbb{R}^3_+$ is a positively invariant set of the NSFD model \eqref{eq:3}, that is, $S_n, I_n, R_n \geq 0$ whenever $S_0, I_0, R_0 \geq 0$. Moreover, we have
\begin{equation}\label{eq:24}
\begin{split}
&\limsup_{n \to \infty}S_n \leq \dfrac{\Lambda}{\mu_0 + \nu},\quad \quad \dfrac{\Lambda}{p} \leq \limsup_{n \to \infty}\big(S_n + I_n\big) \leq \dfrac{\Lambda}{q},\quad \dfrac{r_1\Lambda}{p\mu_0} \leq \limsup_{n \to \infty}R_n \leq \dfrac{r_2\Lambda}{q\mu_0},
\end{split}
\end{equation}
where $p, q, r_1$ and $r_2$ are given by \eqref{eq:4}.
\end{theorem}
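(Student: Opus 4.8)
The plan is to exploit that, although \eqref{eq:23} is implicit, it can be solved explicitly one variable at a time. Solving the first equation of \eqref{eq:23} for $S_{n+1}$ gives $S_{n+1} = \big(S_n + \varphi(\Delta t)\Lambda\big)\big/\big(1 + \varphi(\Delta t)(f^*(S_n,I_n) + \mu_0 + \nu)\big)$; substituting this into the second gives $I_{n+1} = \big(I_n + \varphi(\Delta t)S_{n+1}f^*(S_n,I_n)\big)\big/\big(1+\varphi(\Delta t)(\mu_0+\mu_1+\beta)\big)$; and the third gives $R_{n+1} = \big(R_n + \varphi(\Delta t)(\beta I_{n+1} + \nu S_{n+1})\big)\big/\big(1+\varphi(\Delta t)\mu_0\big)$. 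Since $\varphi(\Delta t) > 0$ for every finite $\Delta t$ and $f^*(S,I) = f(S,I)/S \ge 0$ on $\mathbb{R}_+^2$ by (H1) (with $f^*(0,I) = 0$), each of these formulas returns a nonnegative value from nonnegative data, so an induction on $n$ yields $S_n, I_n, R_n \ge 0$ for all $n$; hence $\mathbb{R}_+^3$ is positively invariant, and this holds without any restriction on $\Delta t$.

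For the asymptotic estimates I would first isolate an elementary scalar comparison lemma: if a nonnegative sequence $(x_n)$ satisfies $x_{n+1} \le a x_n + b$ with $0 < a < 1$ and $b \ge 0$, then $\limsup_{n\to\infty} x_n \le b/(1-a)$; symmetrically, $x_{n+1} \ge a x_n + b$ forces $\liminf_{n\to\infty} x_n \ge b/(1-a)$. (Both follow by iterating and using $a^n \to 0$.) Dropping the nonnegative term $\varphi(\Delta t)f^*(S_n,I_n)$ in the denominator of the $S_{n+1}$ formula gives $S_{n+1} \le \big(S_n + \varphi(\Delta t)\Lambda\big)\big/\big(1+\varphi(\Delta t)(\mu_0+\nu)\big)$, and the lemma yields $\limsup_n S_n \le \Lambda/(\mu_0+\nu)$. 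Adding the first two equations of \eqref{eq:23} cancels the $f^*$ terms and produces $\big((S_{n+1}+I_{n+1}) - (S_n+I_n)\big)/\varphi(\Delta t) = \Lambda - (\mu_0+\nu)S_{n+1} - (\mu_0+\mu_1+\beta)I_{n+1}$; bounding the right side between $\Lambda - p(S_{n+1}+I_{n+1})$ and $\Lambda - q(S_{n+1}+I_{n+1})$ and solving for $S_{n+1}+I_{n+1}$ sandwiches the sequence $(S_n+I_n)$ between the two recurrences covered by the lemma, giving $\Lambda/p \le \liminf_n(S_n+I_n) \le \limsup_n(S_n+I_n) \le \Lambda/q$, which is even stronger than claimed.

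The $R_n$ estimate is then obtained by chaining. Using $r_1(S_{n+1}+I_{n+1}) \le \beta I_{n+1} + \nu S_{n+1} \le r_2(S_{n+1}+I_{n+1})$, the third equation of \eqref{eq:23} gives $r_1(S_{n+1}+I_{n+1}) - \mu_0 R_{n+1} \le \big(R_{n+1}-R_n\big)/\varphi(\Delta t) \le r_2(S_{n+1}+I_{n+1}) - \mu_0 R_{n+1}$. Given $\epsilon > 0$, the previous step provides an index $N$ with $\Lambda/p - \epsilon \le S_n+I_n \le \Lambda/q + \epsilon$ for all $n \ge N$; for such $n$ the sequence $(R_n)$ satisfies both $R_{n+1} \ge \big(R_n + \varphi(\Delta t)r_1(\Lambda/p-\epsilon)\big)\big/\big(1+\varphi(\Delta t)\mu_0\big)$ and $R_{n+1} \le \big(R_n + \varphi(\Delta t)r_2(\Lambda/q+\epsilon)\big)\big/\big(1+\varphi(\Delta t)\mu_0\big)$, so the comparison lemma yields $r_1(\Lambda/p-\epsilon)/\mu_0 \le \liminf_n R_n \le \limsup_n R_n \le r_2(\Lambda/q+\epsilon)/\mu_0$; letting $\epsilon \to 0$ completes the proof. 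The only mildly delicate point is this final chaining, which requires the $S_n+I_n$ bounds to be in place before the $R$-recurrence can be closed with an $\epsilon$-margin; everything else reduces to repeated use of the same scalar comparison lemma together with the positivity of $\varphi(\Delta t)$.
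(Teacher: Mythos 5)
Your proof is correct and follows essentially the same route as the paper: positivity comes from the explicit form of the scheme (the paper's system \eqref{eq:25}), and the asymptotic bounds come from sandwiching $S_n$, $S_n+I_n$ and $R_n$ between linear recurrences obtained by dropping or bounding the $f^*$ terms. The only differences are refinements of presentation --- you package the paper's iterated geometric-series estimate as a scalar comparison lemma, and your $\epsilon$-margin chaining for $R_n$ makes rigorous the paper's looser ``for $n$ large enough'' step, in fact yielding the stronger $\liminf$ lower bounds.
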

\begin{proof}
First, we rewrite the system \eqref{eq:23} in the explicit form
\begin{equation}\label{eq:25}
\begin{split}
&S_{n + 1} = \dfrac{S_n + \varphi \Lambda}{1 + \varphi f^*(S_n, I_n) + \varphi(\mu_0 + \nu)},\\
&I_{n + 1} = \dfrac{I_n + \varphi S_{n + 1}f^*(S_n, I_n)}{1 + \varphi(\mu_0 + \mu_1 + \beta)},\\
&R_{n + 1} = \dfrac{R_n + \varphi \beta I_{n + 1} + \varphi\nu S_{n + 1}}{1 + \varphi \mu_0}.
\end{split}
\end{equation}
Hence, $S_{n + 1}, I_{n + 1}, R_{n + 1} \geq 0$ if $S_n, I_n, R_n \geq 0$.\par
It follows from the first equation of \eqref{eq:23} that
\begin{equation*}
\dfrac{S_{n + 1} - S_n}{\varphi} \leq \Lambda - (\mu_0 + \nu)S_{n + 1},
\end{equation*}
which implies that
\begin{equation*}
\begin{split}
S_{n + 1} &\leq \dfrac{S_n}{1 + \varphi(\mu_0 + \nu)} + \dfrac{\varphi\Lambda}{1 + \varphi(\mu_0 + \nu)}\\
&\leq \dfrac{1}{1 + \varphi(\mu_0 + \nu)}\bigg[ \dfrac{S_{n - 1}}{1 + \varphi(\mu_0 + \nu)} + \dfrac{\varphi\Lambda}{1 + \varphi(\mu_0 + \nu)}\bigg] + \dfrac{\varphi\Lambda}{1 + \varphi(\mu_0 + \nu)}\\
&= \bigg[\dfrac{1}{1 + \varphi(\mu_0 + \nu)}\bigg]^2S_{n - 1} + \dfrac{\varphi\Lambda}{1 + \varphi(\mu_0 + \nu)}\bigg[1 + \dfrac{1}{1 + \varphi(\mu_0 + \nu)}\bigg]\\
&\leq \ldots \leq \bigg[\dfrac{1}{1 + \varphi(\mu_0 + \nu)}\bigg]^{n + 1}S_{0} + \dfrac{\varphi\Lambda}{1 + \varphi(\mu_0 + \nu)}\sum_{j = 0}^n\bigg(\dfrac{1}{1 + \varphi(\mu_0 + \nu)}\bigg)^j\\
&=  \bigg[\dfrac{1}{1 + \varphi(\mu_0 + \nu)}\bigg]^{n + 1}S_{0} +\dfrac{\varphi\Lambda}{1 + \varphi(\mu_0 + \nu)}\dfrac{1 - \bigg(\dfrac{1}{1 + \varphi(\mu_0 + \nu)}\bigg)^{n + 1}}{1 - \dfrac{1}{1 + \varphi(\mu_0 + \nu)}}.
\end{split}
\end{equation*}
Letting $n \to \infty$, the first estimate of  \eqref{eq:24} is obtained.\par
Similarly, from the first and second equations of \eqref{eq:23} we have
\begin{equation*}
\Lambda - p(S_{n + 1} + I_{n + 1}) \leq \dfrac{(S_{n + 1} + I_{n+1}) - (S_n + I_n)}{\varphi} = \Lambda - (\mu_0 + \nu)S_{n + 1} - (\mu_0 + \mu_1 + \beta)I_{n + 1} \leq \Lambda - q(S_{n + 1} + I_{n + 1}),
\end{equation*}
which implies that
\begin{equation*}
\dfrac{\varphi\Lambda + (S_n + I_n)}{1 + \varphi p} \leq (S_{n + 1} + I_{n + 1}) \leq \dfrac{\varphi\Lambda + (S_n + I_n)}{1 + \varphi q}.
\end{equation*}
From this estimate, we obtain the second estimate of \eqref{eq:24}.\par
Lastly, it follows from the third equation of \eqref{eq:23} that
\begin{equation*}
r_1\dfrac{\Lambda}{p} - \mu_0 R_{n+1} \leq r_1(S_{n + 1} + I_{n + 1}) - \mu_0 R_{n+1} \leq \dfrac{R_{n + 1} - R_n}{\varphi} \leq r_2(S_{n + 1} + I_{n + 1}) - \mu_0 R_{n + 1} \leq r_2\dfrac{\Lambda}{q} - \mu_0 R_{n+1}
\end{equation*}
for $n$ large enough. Hence,
\begin{equation*}
\dfrac{R_n + \varphi r_1{\Lambda}/{p}}{1 + \varphi\mu_0} \leq R_{n + 1} \leq \dfrac{R_n + \varphi r_2{\Lambda}/{q}}{1 + \varphi\mu_0},
\end{equation*}
which implies the last estimate of \eqref{eq:24}. The proof is completed.
\end{proof}
We now compute the basic reproduction number for the discrete model \eqref{eq:23} by the next generation matrix approach \cite{Allen1}.\par
It is easy to verify that the model \eqref{eq:23} always has a unique DFE point $E^0 = (S_0, 0, R_0)$ for all the values of the parameters. Since the first two equations of \eqref{eq:23} do not include $R_n$, it is sufficient to consider the following sub-model
\begin{equation}\label{eq:26}
\begin{split}
&\dfrac{S_{n + 1} - S_n}{\varphi(\Delta t)} = \Lambda - S_{n+1}f^*(S_n, I_n) -\big(\mu_0 + \nu\big)S_{n+1},\\
&\dfrac{I_{n + 1} - I_n}{\varphi(\Delta t)} = S_{n+1}f^*(S_n, I_n) - \big(\mu_0 + \mu_1 + \beta \big)I_{n + 1}.
\end{split}
\end{equation}
We reorder the variables in \eqref{eq:26} as $(I_n, S_n)$. Then, the DFE point is transformed to $(0, S_0)$. By using \eqref{eq:21a}, the corresponding Jacobian matrix of \eqref{eq:26} at $(0, S_0)$ is
\begin{equation*}
J_0 = 
\begin{pmatrix}
\dfrac{1 + \varphi \dfrac{\partial f}{\partial I}(S_0, 0)}{1 + \varphi(\mu_0 + \mu_1 + \beta)}&0\\
&\\
-\dfrac{\varphi\dfrac{\partial f}{\partial I}(S_0, 0)}{1 + \varphi(\mu_0 + \nu)}&\dfrac{1}{1 + \varphi(\mu_0 + \nu)}
\end{pmatrix}
\end{equation*}
Following the method in \cite{Allen1}, we represent the matrix $J_0$ in the form
\begin{equation*}
J_0 = 
\begin{pmatrix}
F + T&0\\
A&C
\end{pmatrix},
\end{equation*}
where
\begin{equation*}
F = \dfrac{\varphi \dfrac{\partial f}{\partial I}(S_0, 0)}{1 + \varphi(\mu_0 + \mu_1 + \beta)}, \quad T = \dfrac{1}{1 + \varphi(\mu_0 + \mu_1 + \beta)}, \quad C = \dfrac{1}{1 + \varphi(\mu_0 + \nu}, \quad A = -\dfrac{\varphi\dfrac{\partial f}{\partial I}(S_0, 0)}{1 + \varphi(\mu_0 + \nu)}.
\end{equation*}
It is clear that  $F$ and $T$ are non-negative, $F + T$ is irreducible, and matrices $C$ and $T$ satisfy
\begin{equation*}
\rho(T) < 1, \quad \rho(C) < 1.
\end{equation*}
Hence, the basic reproduction number of the discrete model \eqref{eq:23} can be computed as
\begin{equation*}
\mathcal{R}_0 = \rho\big(F(1 - T)^{-1}\big) = \dfrac{1}{\mu_0 + \mu_1 + \beta}\dfrac{\partial f\bigg(\dfrac{\Lambda}{\mu_0 + \nu},\,\,0\bigg)}{\partial I},
\end{equation*}
which means that the basic reproduction numbers of the models \eqref{eq:23} and \eqref{eq:3} are equal.\par
Similarly to Proposition \ref{lemma2}, it is easy to verify that the NSFD model \eqref{eq:23} has a unique DEE point $E^* = (S^*, I^*, R^*)$ if and only if $\mathcal{R}_0 > 1$.\par
The following result on the local asymptotic stability of the DFE point of the model \eqref{eq:23} is a direct consequence of Theorem 2.1 in \cite{Allen1}.
\begin{corollary}[Local asymptotic stability of the DFE point]\label{corollary1}
The DFE point $E^0$ of the NSFD model \eqref{eq:23} is locally asymptotically stable if $\mathcal{R}_0 \leq 1$ and unstable if $\mathcal{R}_0 > 1$.
\end{corollary}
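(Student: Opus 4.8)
The plan is to read off the stability of $E^0$ from the linearization at the disease-free fixed point, exploiting the block triangular structure of the Jacobian $J_0$ already displayed above, and to match it against the discrete-time next-generation criterion of \cite{Allen1}. Two essentially equivalent routes are available; I would present the short one and note its relation to the cited theorem.

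\textbf{Direct route.} After the reordering $(I_n,S_n)$, the matrix $J_0$ is lower triangular, so its eigenvalues are its diagonal entries,
\begin{equation*}
\lambda_1 = \dfrac{1 + \varphi\,\dfrac{\partial f}{\partial I}(S_0,0)}{1 + \varphi(\mu_0+\mu_1+\beta)}, \qquad \lambda_2 = \dfrac{1}{1+\varphi(\mu_0+\nu)}.
\end{equation*}
The second satisfies $0 < \lambda_2 < 1$ for every $\varphi>0$, so it never influences stability. For the first, I would substitute the identity $\frac{\partial f}{\partial I}(S_0,0) = \mathcal{R}_0(\mu_0+\mu_1+\beta)$ coming from \eqref{eq:6}; this gives $\lambda_1 = \frac{1+\varphi\mathcal{R}_0(\mu_0+\mu_1+\beta)}{1+\varphi(\mu_0+\mu_1+\beta)} > 0$, and a one-line comparison shows $\lambda_1 < 1 \Leftrightarrow \mathcal{R}_0 < 1$, $\lambda_1 = 1 \Leftrightarrow \mathcal{R}_0 = 1$, and $\lambda_1 > 1 \Leftrightarrow \mathcal{R}_0 > 1$. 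Hence $\rho(J_0) < 1$ when $\mathcal{R}_0 < 1$, giving local asymptotic stability, while $\rho(J_0) > 1$ when $\mathcal{R}_0 > 1$, giving instability.

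\textbf{Next-generation route and the obstacle.} Alternatively — and this is what the statement refers to — I would verify that all the hypotheses of \cite[Theorem 2.1]{Allen1} hold, which has in fact already been done in the lines preceding the corollary: $F,T \geq 0$, $F+T$ is irreducible, $\rho(T)<1$, $\rho(C)<1$, and $\mathcal{R}_0 = \rho\bigl(F(1-T)^{-1}\bigr)$; then that theorem yields the asserted stability and instability simultaneously. The delicate point, and the one I expect to be the real obstacle, is the borderline case $\mathcal{R}_0 = 1$: there $\lambda_1 = 1$ lies on the unit circle, so a bare spectral-radius argument is inconclusive and local asymptotic stability at equality is not visible from the linearization alone. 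This is exactly where one must lean on the sharper conclusion built into \cite[Theorem 2.1]{Allen1} (via the $M$-matrix structure of $1-T$ and the monotone dependence of $\rho(J_0)$ on $\mathcal{R}_0$), or, if preferred, supplement it with the global result of Theorem \ref{theorem3} transported through the dynamic consistency of the scheme. Everything else is routine bookkeeping.
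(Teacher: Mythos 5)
Your second route is exactly the paper's proof: the author verifies the hypotheses of \cite[Theorem 2.1]{Allen1} (nonnegativity of $F$ and $T$, irreducibility of $F+T$, $\rho(T)<1$, $\rho(C)<1$) in the paragraph preceding the corollary and then simply invokes that theorem for the sub-model \eqref{eq:26}, the third eigenvalue $1/(1+\varphi\mu_0)\in(0,1)$ of the full scheme \eqref{eq:23} being harmless. Your direct route is a more elementary equivalent that the paper does not spell out: since $J_0$ is triangular, the substitution $\frac{\partial f}{\partial I}(S_0,0)=\mathcal{R}_0(\mu_0+\mu_1+\beta)$ from \eqref{eq:6} gives $\lambda_1<1\Leftrightarrow\mathcal{R}_0<1$ and $\lambda_1>1\Leftrightarrow\mathcal{R}_0>1$ for every finite step size, which yields the same dichotomy without the next-generation machinery and has the merit of making the threshold mechanism visible.

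On the borderline case: you are right that $\mathcal{R}_0=1$ is out of reach of any spectral-radius argument, and in fact the paper's own proof does not reach it either --- it cites the theorem only for $\mathcal{R}_0<1$ and $\mathcal{R}_0>1$, although the corollary statement says $\mathcal{R}_0\le 1$. Be careful, however, with your proposed repairs: monotonicity of $\rho(J_0)$ in $\mathcal{R}_0$ only tells you $\rho(J_0)=1$ at $\mathcal{R}_0=1$, which is still inconclusive, and transporting the continuous global result of Theorem \ref{theorem3} ``through dynamic consistency'' is circular, since the GAS of the NSFD scheme is only suggested numerically (Remark \ref{remark1}) and never proved in the paper. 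Closing the case $\mathcal{R}_0=1$ would require a genuinely nonlinear argument, for instance a discrete analogue of the Lyapunov function \eqref{eq:18}; as written, both your proof and the paper's establish the corollary only for $\mathcal{R}_0<1$ (stability) and $\mathcal{R}_0>1$ (instability).
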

\begin{proof}
From Theorem 2.1 in \cite{Allen1}, we conclude that the DFE point $(S_0, I_0)$  of \eqref{eq:26} is locally asymptotically stable if $\mathcal{R}_0 < 1$ and unstable if $\mathcal{R}_0 > 1$. Consequently, the local asymptotic stability of $E^0$ of \eqref{eq:23} is proved.
\end{proof}
\begin{theorem}\label{theorem5}
The DEE point $E^*$ of the NSFD model \eqref{eq:23} is locally asymptotically stable if $\mathcal{R}_0 > 1$.
\end{theorem}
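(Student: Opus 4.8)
The plan is to prove that $E^*$ is locally asymptotically stable by linearizing the NSFD iteration at $E^*$ and showing the resulting Jacobian is a Schur matrix (spectral radius strictly less than $1$), in exact parallel with the treatment of the continuous model in Lemma~\ref{lemma3}(ii). First I would note that $E^*=(S^*,I^*,R^*)$ is genuinely a fixed point of \eqref{eq:23}: feeding a constant state into the right-hand side of \eqref{eq:23} reproduces the algebraic system \eqref{eq:8}, which $E^*$ solves by Proposition~\ref{lemma2}. Since \eqref{eq:23} can be solved explicitly and sequentially — first $S_{n+1}$, then $I_{n+1}$, then $R_{n+1}$, as displayed in \eqref{eq:25} — the iteration map $\Phi:(S_n,I_n,R_n)\mapsto(S_{n+1},I_{n+1},R_{n+1})$ is smooth near $E^*$, and I would compute $D\Phi(E^*)$ by implicit differentiation of \eqref{eq:23}, using the values of $f^*$ and its first-order partials at $E^*$ recorded in \eqref{eq:21a} together with the equilibrium identity $f(S^*,I^*)/I^*=\mu_0+\mu_1+\beta$.

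Because the $R$-equation of \eqref{eq:23} does not feed back into the first two, $D\Phi(E^*)$ is block lower triangular, so one eigenvalue is $1/(1+\varphi\mu_0)\in(0,1)$ and the other two are those of the $2\times2$ block governing $(S,I)$,
\begin{equation*}
\widehat J=
\begin{pmatrix}
\dfrac{1-\varphi f_S^{*}}{1+\varphi(\mu_0+\nu)} & \dfrac{-\varphi f_I^{*}}{1+\varphi(\mu_0+\nu)}\\[3mm]
\dfrac{\varphi f_S^{*}}{1+\varphi(\mu_0+\mu_1+\beta)} & \dfrac{1+\varphi f_I^{*}}{1+\varphi(\mu_0+\mu_1+\beta)}
\end{pmatrix},
\qquad
f_S^{*}:=\frac{\partial f}{\partial S}(S^*,I^*),\quad f_I^{*}:=\frac{\partial f}{\partial I}(S^*,I^*),
\end{equation*}
where $f_S^{*}>0$ by (H3), $f_I^{*}\ge0$ by (H4), and $f_I^{*}<\mu_0+\mu_1+\beta$ by (H5) together with the equilibrium identity (this is exactly \eqref{eq:12new}). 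It therefore suffices to show $\rho(\widehat J)<1$, for which I would invoke the Schur--Cohn (Jury) criterion: a real $2\times2$ matrix $M$ has $\rho(M)<1$ if and only if $\det M<1$, $\ 1-\mathrm{tr}\,M+\det M>0$ and $\ 1+\mathrm{tr}\,M+\det M>0$. A direct computation gives
\begin{equation*}
\det\widehat J=\frac{1+\varphi(f_I^{*}-f_S^{*})}{(1+\varphi(\mu_0+\nu))(1+\varphi(\mu_0+\mu_1+\beta))},
\qquad
\mathrm{tr}\,\widehat J=\frac{1-\varphi f_S^{*}}{1+\varphi(\mu_0+\nu)}+\frac{1+\varphi f_I^{*}}{1+\varphi(\mu_0+\mu_1+\beta)},
\end{equation*}
so the three requirements become polynomial inequalities in $\varphi$ to be checked one by one.

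Two of them are straightforward. Clearing denominators, $\det\widehat J<1$ reduces to $f_I^{*}-f_S^{*}<(\mu_0+\nu)+(\mu_0+\mu_1+\beta)+\varphi(\mu_0+\nu)(\mu_0+\mu_1+\beta)$, which is immediate from $f_I^{*}<\mu_0+\mu_1+\beta$ and $f_S^{*}>0$; and $1-\mathrm{tr}\,\widehat J+\det\widehat J>0$ collapses, after cancellation, to $\varphi^2\,[\,(\mu_0+\nu)(\mu_0+\mu_1+\beta-f_I^{*})+(\mu_0+\mu_1+\beta)f_S^{*}\,]>0$, which holds for the same reasons. The step I expect to be the main obstacle is the remaining inequality $1+\mathrm{tr}\,\widehat J+\det\widehat J>0$: the first summand of $\mathrm{tr}\,\widehat J$ turns negative once $\varphi>1/f_S^{*}$, and after clearing denominators one must show
\begin{equation*}
4+2\varphi\,(\,2\mu_0+\mu_1+\beta+\nu+f_I^{*}-f_S^{*}\,)+\varphi^2\,[\,(\mu_0+\nu)(\mu_0+\mu_1+\beta+f_I^{*})-(\mu_0+\mu_1+\beta)f_S^{*}\,]>0 ,
\end{equation*}
whose positivity is not automatic. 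I would attack it by splitting into the case $f_S^{*}\le\mu_0+\nu$, in which every coefficient above is nonnegative and the inequality is trivial, and the case $f_S^{*}>\mu_0+\nu$, in which $f_S^{*}$ must be controlled against the remaining parameters by exploiting the explicit equilibrium relations $\Lambda=f(S^*,I^*)+(\mu_0+\nu)S^*$ and $f(S^*,I^*)=(\mu_0+\mu_1+\beta)I^*$ together with the growth bound $f(S,I)\le\eta S$ from (H2) and the boundedness of $\partial_S f$ from (H3); making these estimates tight enough that the quadratic in $\varphi$ stays positive for every step size is the crux of the whole argument. Once all three Schur--Cohn inequalities are established, every eigenvalue of $D\Phi(E^*)$ lies in the open unit disk, and the standard linearized-stability theorem for discrete dynamical systems yields the local asymptotic stability of $E^*$ whenever $\mathcal R_0>1$.
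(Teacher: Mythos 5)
Your strategy coincides with the paper's: linearize the NSFD map at $E^*$, use the block-triangular structure to extract the eigenvalue $1/(1+\varphi\mu_0)$, and apply the Schur--Cohn (Jury) conditions $|\mathrm{tr}|<1+\det<2$ to the remaining $2\times 2$ block; your $\widehat J$, its determinant and trace are exactly the paper's $J^*$, and your verification of $\det\widehat J<1$ and of $1-\mathrm{tr}\,\widehat J+\det\widehat J>0$ matches what the paper asserts. The problem is that your argument stops at the decisive step: you do not prove $1+\mathrm{tr}\,\widehat J+\det\widehat J>0$, you only propose a case split, and the repair you sketch cannot succeed. In the case $f_S^*>\mu_0+\nu$ you hope to dominate $f_S^*$ using (H2), (H3) and the equilibrium identities, but the constant $\eta$ in (H2) and the bound in (H3) are unrelated to $\mu_0,\mu_1,\beta,\nu$, so no inequality of the needed form follows; indeed, for the matrix $\widehat J$ as written, the quantity $4+2\varphi\,(\mu_0+\nu+\mu_0+\mu_1+\beta+f_I^*-f_S^*)+\varphi^2[(\mu_0+\nu)(\mu_0+\mu_1+\beta+f_I^*)-(\mu_0+\mu_1+\beta)f_S^*]$ genuinely becomes negative when $f_S^*$ is large and $\varphi$ is moderate (this is realizable, e.g., by the saturated incidence $f=\beta SI/(1+\gamma I)$ with $\beta/\gamma$ large), so the third Schur--Cohn condition cannot be derived from (H1)--(H5) for that matrix. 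Your instinct that this inequality is the crux is therefore correct --- the paper itself dismisses it with ``it is easy to verify'' --- but as it stands your proposal leaves the theorem unproven.

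The way to close the gap is not a sharper bound on $f_S^*$ but a more careful linearization. The matrix you (and the paper) use rests on \eqref{eq:21a}, whose identity $f^*(S^*,I^*)=0$ holds only at the DFE; at the DEE one has $f^*(S^*,I^*)=f(S^*,I^*)/S^*>0$ and $\partial f^*/\partial S(S^*,I^*)=\big(f_S^*-f(S^*,I^*)/S^*\big)/S^*$. Differentiating the explicit map \eqref{eq:25} at $E^*$ therefore produces additional positive terms: the first-row denominator becomes $1+\varphi\big(f^*(S^*,I^*)+\mu_0+\nu\big)$ and its $(1,1)$ numerator $1-\varphi\big(f_S^*-f^*(S^*,I^*)\big)$, with corresponding corrections in the second row. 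These extra terms are what keep the trace under control: for instance, for $f=\beta SI$ a direct computation with the corrected block gives $\det=1/\big(1+\varphi(f_S^*+\mu_0+\nu)\big)<1$, a positive trace (so $1+\mathrm{tr}+\det>0$ is immediate), and $1-\mathrm{tr}+\det=\varphi^2(\mu_0+\mu_1+\beta)f_S^*\big/\big\{\big[1+\varphi(f_S^*+\mu_0+\nu)\big]\big[1+\varphi(\mu_0+\mu_1+\beta)\big]\big\}>0$ for every $\varphi>0$. So to complete your proof you should recompute $D\Phi(E^*)$ from \eqref{eq:25} without the simplifications of \eqref{eq:21a} and run your Schur--Cohn test on that matrix, using \eqref{eq:12new} and the equilibrium relation $f(S^*,I^*)=(\mu_0+\mu_1+\beta)I^*$; the strategy of bounding $f_S^*$ by the model rates should be abandoned.
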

\begin{proof}
The Jacobian matrix of the system \eqref{eq:23} at $E^*$ is given by
\begin{equation*}\label{eq:29}
J(E^*) = 
\begin{pmatrix}
\dfrac{1 - \varphi\dfrac{\partial f}{\partial S}(S^*, I^*)}{1 + \varphi(\mu_0 + \nu)}&-\dfrac{\varphi\dfrac{\partial f}{\partial I}(S^*, I^*)}{1 + \varphi(\mu_0 + \nu)}&0\\
&\\
\dfrac{\varphi\dfrac{\partial f}{\partial S}(S^*, I^*)}{1 + \varphi(\mu_0 + \mu_1 + \beta)}&\dfrac{1 + \varphi\dfrac{\partial f}{\partial I}(S^*, I^*)}{1 + \varphi(\mu_0 + \mu_1 + \beta)}&0\\
&\\
\dfrac{\partial R_{n+1}}{\partial S_n}(E^*)&\dfrac{\partial R_{n+1}}{\partial I_n}(E^*)&\dfrac{1}{1 + \varphi\mu_0}
\end{pmatrix}.
\end{equation*}
Hence, one of three eigenvalue of $J(E^*)$ is $\lambda_3 = \dfrac{1}{1 + \varphi\mu_0}$ and two remaining eigenvalues are the ones of the sub-matrix
\begin{equation*}\label{eq:30}
J^* = 
\begin{pmatrix}
\dfrac{1 - \varphi\dfrac{\partial f}{\partial S}(S^*, I^*)}{1 + \varphi(\mu_0 + \nu)}&-\dfrac{\varphi\dfrac{\partial f}{\partial I}(S^*, I^*)}{1 + \varphi(\mu_0 + \nu)}\\
&\\
\dfrac{\varphi\dfrac{\partial f}{\partial S}(S^*, I^*)}{1 + \varphi(\mu_0 + \mu_1 + \beta)}&\dfrac{1 + \varphi\dfrac{\partial f}{\partial I}(S^*, I^*)}{1 + \varphi(\mu_0 + \mu_1 + \beta)}
\end{pmatrix}.
\end{equation*}
It is clear that $|\lambda_3| < 1$. Denote by $\lambda_1$ and $\lambda_2$ the eigenvalues of $J^*$. We will show that $|\lambda_1| < 1$ and $|\lambda_2| < 1$. Thanks to Theorem 2.10 in \cite{Allen}, it is sufficient to verify that
\begin{equation}\label{eq:31}
|Tr(J^*)| < 1 + \det(J^*) < 2.
\end{equation}
Indeed, we first have
\begin{equation*}
\det(J^*) = \dfrac{1 + \varphi\dfrac{\partial f}{\partial I}(S^*, I^*) - \varphi\dfrac{\partial f}{\partial S}(S^*, I^*)}{\big[1 + \varphi(\mu_0 + \mu_1 + \beta)\big]\big[1 + \varphi(\mu_0 + \nu)\big]}.
\end{equation*}
So, it follows from \eqref{eq:12new} that $\det(J^*) < 1$. On the other hand,
\begin{equation*}
Tr(J^*) = \dfrac{\bigg(1 - \varphi\dfrac{\partial f}{\partial S}(S^*, I^*)\bigg)\big[1 + \varphi(\mu_0 + \mu_1 + \beta)\big] + \bigg(1 + \varphi\dfrac{\partial f}{\partial I}(S^*, I^*)\bigg)\big[1 + \varphi(\mu_0 + \nu)\big]}{\big[1 + \varphi(\mu_0 + \mu_1 + \beta)\big]\big[1 + \varphi(\mu_0 + \nu)\big]}.
\end{equation*}
Hence, it is easy to verify that
\begin{equation*}
1 + \det(J^*) - Tr(J^*) > 0, \quad 1 + \det(J^*) + Tr(J^*) > 0,
\end{equation*}
or equivalently,
\begin{equation*}
|Tr(J^*)| < 1 + \det(J^*).
\end{equation*}
Therefore, \eqref{eq:31} is verified. This completes the proof.
\end{proof}
\begin{remark}\label{remark1}
Summing up the results constructed in this section, we conclude that the NSFD model \eqref{eq:23} is dynamically consistent to the continuous model \eqref{eq:3}, with respect to the positivity, boundedness, basic reproduction number and local asymptotic stability. Also, as will be suggested by numerical examples in the next section, the GAS of the model \eqref{eq:3} is also preserved by the NSFD model \eqref{eq:23}.
\end{remark}
To end this section, we investigate the convergence and error bounds for the NSFD scheme \eqref{eq:23}. We recall that a difference scheme is said to be \textit{convergent of order p} if the global error $e_n$ with $e_0 = 0$ and $e_n := y_n - y(t_n)$, satisfies (see \cite[Section 3.2]{Ascher})
\begin{equation*}
e_n = \mathcal{O}(\Delta t^p), \quad n = 1, 2, 3,\ldots.
\end{equation*}
\begin{theorem}\label{theorem6}
The NSFD scheme \eqref{eq:23} is convergent of order $1$.
\end{theorem}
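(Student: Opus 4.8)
The plan is to follow the classical route for one-step schemes: establish (a) uniform boundedness of both the exact and the discrete solutions on a common compact set, (b) a local truncation error of order $\mathcal{O}(\Delta t^{2})$, and (c) a discrete stability (Lipschitz) estimate, and then combine (b) and (c) via a discrete Gronwall inequality to get $e_{n}=\mathcal{O}(\Delta t)$. First I would fix a finite horizon $[0,T]$ and a step-size range $(0,\Delta t_{0}]$. By Proposition \ref{lemma1} the exact solution $(S,I,R)$ of \eqref{eq:3} remains in a fixed compact set on $[0,T]$, and by Theorem \ref{theorem4}, working with the explicit form \eqref{eq:25} (whose denominators are all $\geq 1$), the iterates $(S_{n},I_{n},R_{n})$ stay in a fixed compact set as well, uniformly for $\Delta t\in(0,\Delta t_{0}]$ (we only use that $\varphi(\Delta t)=\Delta t+\mathcal{O}(\Delta t^{2})$ is bounded). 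Enlarging if necessary, let $K$ be one compact set containing both. On $K$ the function $f^{*}$ of \eqref{eq:21} is $C^{1}$ with bounded partial derivatives (a consequence of (H1) and (H5)), so the three maps $(S_{n},I_{n})\mapsto S_{n+1}$, $(I_{n},S_{n+1},S_{n},I_{n})\mapsto I_{n+1}$, $(R_{n},S_{n+1},I_{n+1})\mapsto R_{n+1}$ appearing in \eqref{eq:25} are Lipschitz on $K$, with Lipschitz constant $1+\mathcal{O}(\varphi(\Delta t))$ in the "diagonal'' variable and $\mathcal{O}(\varphi(\Delta t))$ in the others.

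Next, the local truncation error. Substituting the exact solution into \eqref{eq:23}, e.g. $\dfrac{S(t_{n+1})-S(t_{n})}{\varphi(\Delta t)}=\Lambda-S(t_{n+1})f^{*}(S(t_{n}),I(t_{n}))-(\mu_{0}+\nu)S(t_{n+1})+\tau_{n}^{S}$, I would Taylor-expand $S(t_{n+1})=S(t_{n})+\Delta t\,S'(t_{n})+\mathcal{O}(\Delta t^{2})$ — valid because, under the standing assumption that $f\in C^{1}$, the solution of \eqref{eq:3} is $C^{2}$ on $[0,T]$ — then use $S'(t_{n})=\Lambda-S(t_{n})f^{*}(S(t_{n}),I(t_{n}))-(\mu_{0}+\nu)S(t_{n})$ from \eqref{eq:3}, together with $\varphi(\Delta t)=\Delta t+\mathcal{O}(\Delta t^{2})$, $S(t_{n+1})-S(t_{n})=\mathcal{O}(\Delta t)$ and boundedness of $f^{*}$ on $K$. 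This gives $\varphi(\Delta t)\,\tau_{n}^{S}=\mathcal{O}(\Delta t^{2})$ uniformly in $n$, and the same for the $I$- and $R$-equations, hence $\max_{n}|\tau_{n}|=\mathcal{O}(\Delta t)$.

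Finally, the error recursion. Subtracting the exact-solution identities from \eqref{eq:25} and setting $e_{n}=|S_{n}-S(t_{n})|+|I_{n}-I(t_{n})|+|R_{n}-R(t_{n})|$ with $e_{0}=0$, I would exploit the triangular (sequential) structure of \eqref{eq:25} and the Lipschitz bounds of the first step: the error in $S_{n+1}$ is controlled by $e_{n}$, then the error in $I_{n+1}$ by $e_{n}$ and the error in $S_{n+1}$, then the error in $R_{n+1}$ similarly. This yields $e_{n+1}\leq(1+C\varphi(\Delta t))e_{n}+C\varphi(\Delta t)\max_{n}|\tau_{n}|\leq(1+C_{1}\Delta t)e_{n}+C_{2}\Delta t^{2}$ for all $\Delta t\leq\Delta t_{0}$. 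Iterating and using $n\Delta t\leq T$, the discrete Gronwall inequality gives $e_{n}\leq\dfrac{C_{2}\Delta t^{2}}{C_{1}\Delta t}\bigl(e^{C_{1}T}-1\bigr)=\mathcal{O}(\Delta t)$, i.e. first-order convergence.

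I expect the main obstacle to be the bookkeeping in steps (b) and (c): honestly reducing the truncation error to $\mathcal{O}(\Delta t^{2})$ requires combining the $\varphi$-versus-$\Delta t$ mismatch with the fact that $f^{*}$ is evaluated at the old point $(S_{n},I_{n})$ while multiplied by the new $S_{n+1}$, and the error recursion must respect the implicit/triangular coupling so that the error injected into $S_{n+1}$ is correctly propagated into the $I_{n+1}$ and $R_{n+1}$ updates. A minor but genuine point worth flagging is regularity: first-order convergence needs the exact solution to be $C^{2}$, so I would state the theorem under the mild extra hypothesis $f\in C^{1}$ (or, equivalently, that the solution of \eqref{eq:3} is $C^{2}$ on $[0,T]$) beyond (H1)--(H5).
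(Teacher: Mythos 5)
Your proposal is correct and follows essentially the same route as the paper: uniform boundedness of the continuous and discrete solutions on a common compact set, a Taylor-based per-step error of size $\mathcal{O}(\Delta t^{2})$, a Lipschitz estimate on the right-hand sides yielding the recursion $e_{n+1}\leq(1+C_{1}\Delta t)e_{n}+C_{2}\Delta t^{2}$, and a discrete Gronwall/geometric-sum argument giving $e_{n}=\mathcal{O}(\Delta t)$. The only cosmetic difference is that the paper obtains consistency by Taylor-expanding the explicit update maps $G_{j}$ of \eqref{eq:25} to second order in $\Delta t$ (assuming $\varphi''$ exists), whereas you phrase it as a local truncation error computation respecting the triangular structure; your observation that the exact solution must be $C^{2}$ (essentially $f\in C^{1}$) is a regularity point the paper assumes implicitly when it sets $L_{S}:=\sup_{t\geq 0}|S''(t)|$, etc.
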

\begin{proof}
Let $\big(S(0), I(0), R(0)\big) \in \mathbb{R}_3^+$ be any  initial data for initial value problem \eqref{eq:3}. From the boundedness of \eqref{eq:3} and \eqref{eq:23} we set
\begin{equation*}
\begin{split}
&C_S := \sup_{t \geq 0}S(t), \quad C_I := \sup_{t \geq 0}I(t), \quad C_R := \sup_{t \geq 0}R(t), \quad D_S := \sup_{n \geq 0}\{S_n\}, \quad D_I := \sup_{n \geq 0}\{I_n\},\quad D_R := \sup_{n \geq 0}\{R_n\},\\
&\Omega^{CD} := \Big\{(S, I, R)\big|0 \leq S \leq \max\{C_S, D_S\},\,\, 0 \leq I \leq \max\{C_I, D_I\},\,\, 0 \leq R \leq \max\{C_R, D_R\}\Big\}.
\end{split}
\end{equation*}
For each arbitrary but fixed step size $\Delta t^* > 0$, we denote
\begin{equation*}
\Omega^D := \Big\{(S, I, R)\big| 0 \leq S \leq D_S,\,\, 0 \leq I \leq D_I, \,\, 0 \leq R \leq D_R\Big\}.
\end{equation*}
Next, we denote by $F_1(S, I, R), F_2(S, I, R)$ and $F_3(S, I, R)$ the 1st, 2nd and 3rd equations of the right-side functions of the model \eqref{eq:3}. By the boundedness of $S(t), I(t), R(t)$ and the continuity of $g_1, g_2, g_3$, it is valid to set
\begin{equation}\label{eq:32}
L_S := \sup_{t \geq 0}|S''(t)|, \quad L_I := \sup_{t \geq 0}|I''(t)|, \quad L_R := \sup_{t \geq 0}|R(t)|, \quad L := M_S + M_I + M_R
\end{equation}
and
\begin{equation}\label{eq:33}
\begin{split}
&m_j^S := \max_{(S, I, R) \in \Omega^{CS}}\bigg|\dfrac{\partial F_j}{\partial S}(S, I, R)\bigg|, \quad j = 1, 2, 3,\\
&m_j^I := \max_{(S, I, R) \in \Omega^{CS}}\bigg|\dfrac{\partial F_j}{\partial I}(S, I, R)\bigg|, \quad j = 1, 2, 3,\\
&m_j^R := \max_{(S, I, R) \in \Omega^{CS}}\bigg|\dfrac{\partial F_j}{\partial R}(S, I, R)\bigg|, \quad j = 1, 2, 3,\\
&m := \max\Big\{\sum_{j = 1}^3 m_j^S,,\,\sum_{j = 1}^3 m_j^I,\,\, \sum_{j = 1}^3 m_j^R\Big\}.
\end{split}
\end{equation}
Let us denote by $G_1(S_n, I_n, R_n, \Delta t), G_2(S_n, I_n, R_n, \Delta t)$ and $G_3(S_n, I_n, R_n, \Delta t)$ the 1st, 2nd and 3rd equations of the system \eqref{eq:25}, respectively. It is easy to verify that
\begin{equation}\label{eq:34}
\begin{split}
&G_1(S_n, I_n, R_n, 0) \equiv S_n,\\
&G_2(S_n, I_n, R_n, 0) \equiv I_n,\\
&G_3(S_n, I_n, R_n, 0) \equiv R_n,\\
&\dfrac{\partial G_j}{\partial \Delta t}(S_n, I_n, R_n) \equiv F_j(S_n, I_n, R_n), \quad j = 1, 2, 3.
\end{split}
\end{equation}
Assume that $\varphi''(\Delta t)$ exists for all $\Delta t \geq 0$. We set
\begin{equation}\label{eq:35}
\begin{split}
&M_1 := \max_{(S_n, I_n, R_n) \in \Omega^D}\bigg|\dfrac{\partial^2 G_1}{\partial \Delta t^2}(S_n, I_n, R_n, \Delta t)\bigg|,\\
&M_2 := \max_{(S_n, I_n, R_n) \in \Omega^D}\bigg|\dfrac{\partial^2 G_2}{\partial \Delta t^2}(S_n, I_n, R_n, \Delta t)\bigg|,\\
&M_3 := \max_{(S_n, I_n, R_n) \in \Omega^D}\bigg|\dfrac{\partial^2 G_3}{\partial \Delta t^2}(S_n, I_n, R_n, \Delta t)\bigg|,\\
&M := M_1 + M_2 + M_3.
\end{split}
\end{equation}
Now, using the Taylor's theorem we obtain
\begin{equation}\label{eq:36}
\begin{split}
&S(t_{n+1}) = S(t_n) + \Delta t F_1(S(t_n), I(t_n), R(t_n)) + \dfrac{\Delta t^2}{2}S''(\xi_S), \quad t_n < \xi_S < t_{n + 1},\\
&I(t_{n+1}) = I(t_n) + \Delta t F_2(S(t_n), I(t_n), R(t_n)) + \dfrac{\Delta t^2}{2}I''(\xi_I), \quad t_n < \xi_I < t_{n + 1},\\
&R(t_{n+1}) = R(t_n) + \Delta t F_3(S(t_n), I(t_n), R(t_n)) + \dfrac{\Delta t^2}{2}S''(\xi_R), \quad t_n < \xi_R < t_{n + 1},\\
\end{split}
\end{equation}
Next, it follows from the Taylor's theorem and \eqref{eq:34} that
\begin{equation}\label{eq:37}
\begin{split}
&S_{n + 1} = S_n + \Delta t F_1(S_n, I_n, R_n) + \dfrac{\Delta t^2}{2}\dfrac{\partial^2 G_1}{\partial \Delta t^2}(S_n, I_n, R_n, \Delta t_S), \quad 0 < \Delta t_S < \Delta t,\\
&I_{n + 1} = I_n + \Delta t F_2(S_n, I_n, R_n) + \dfrac{\Delta t^2}{2}\dfrac{\partial^2 G_2}{\partial \Delta t^2}(S_n, I_n, R_n, \Delta t_I), \quad 0 < \Delta t_I < \Delta t,\\
&R_{n + 1} = R_n + \Delta t F_3(S_n, I_n, R_n) + \dfrac{\Delta t^2}{2}\dfrac{\partial^2 G_3}{\partial \Delta t^2}(S_n, I_n, R_n, \Delta t_R), \quad 0 < \Delta t_R < \Delta t.
\end{split}
\end{equation}
Similarly,
\begin{equation}\label{eq:38}
\begin{split}
&\Big|F_j(S(t_n), I(T_n), R(t_n)) - F_j(S_n, I_n, R_n)\Big|\\
&= \bigg|\dfrac{\partial F_j}{\partial S}(\zeta_j)(S(t_n) - S_n) + \dfrac{\partial F_j}{\partial I}(\zeta_j)(I(t_n) - I_n) + \dfrac{\partial F_j}{\partial R}(\zeta_j)(R(t_n) - R_n)\bigg|\\
&\leq \bigg|\dfrac{\partial F_j}{\partial S}(\zeta_j)\bigg||S(t_n) - S_n| + \bigg|\dfrac{\partial F_j}{\partial I}(\zeta_j)\bigg||I(t_n) - I_n| + \bigg|\dfrac{\partial F_j}{\partial R}(\zeta_j)\bigg||R(t_n) - R_n|\\
&\leq m_j^S|S(t_n) - S_n| + m_j^I|I(t_n) - I_n| + m_j^R|R(t_n) - R_n|,
\end{split}
\end{equation}
where $\zeta_j$ $(j = 1, 2, 3)$ are points between $(S(t_n), I(t_n), R(t_n))$ and $(S_n, I_n, R_n)$ and $m_j^S, m_j^I, m_j^R$ $(j = 1, 2, 3)$ are given by \eqref{eq:33}.\par
Let us denote
\begin{equation*}
e^S_n = S(t_n) - S_n, \quad e^I_n = I(t_n) - I_n, \quad e^R_n = R(t_n) - R_n, \quad e_n = |e_n^S| + |e_n^I| + |e_n^R|.
\end{equation*}
Then, from \eqref{eq:36}-\eqref{eq:38} we obtain
\begin{equation*}
\begin{split}
&|e_{n + 1}^S| \leq |e_n^S| + \Delta t \big(m_1^S|e_n^S| + m_1^I|e_n^I| + m_1^R|e_n^R|\big) + \dfrac{\Delta t^2}{2}(L_1 + M_1),\\
&|e_{n + 1}^I| \leq |e_n^I| + \Delta t \big(m_2^S|e_n^S| + m_2^I|e_n^I| + m_2^R|e_n^R|\big) + \dfrac{\Delta t^2}{2}(L_2 + M_2),\\
&|e_{n + 1}^R| \leq |e_n^R| + \Delta t \big(m_3^S|e_n^S| + m_3^I|e_n^I| + m_3^R|e_n^R|\big) + \dfrac{\Delta t^2}{2}(L_3 + M_3),
\end{split}
\end{equation*}
which implies that
\begin{equation}\label{eq:40}
e_{n + 1} \leq (1 + m\Delta t)e_n + \dfrac{\Delta t^2}{2}(L + M),
\end{equation}
where $L, m$ and $M$ are defined by \eqref{eq:32}, \eqref{eq:33} and \eqref{eq:35}, respectively.\par
For convenience, we set $\tau = L + M$. It follows from \eqref{eq:40} that
\begin{equation}\label{eq:41}
\begin{split}
e_{n + 1} &\leq (1 + m\Delta t)e_n + \tau\dfrac{\Delta t^2}{2}\\
&\leq (1 + m\Delta t)\bigg[(1 + m\Delta t)e_{n - 1} + \tau\dfrac{\Delta t^2}{2}\bigg] + \tau\dfrac{\Delta t^2}{2} = (1 + m\Delta t)^2e_{n - 1} + \tau\dfrac{\Delta t^2}{2}\Big[1 + (1 + m\Delta t)\Big]\\
&\leq \ldots \leq (1 + m\Delta t)^ne_0 + \tau\dfrac{\Delta t^2}{2}\sum_{j = 0}^n(1 + m\Delta t)^j = \tau\dfrac{\Delta t^2}{2}\dfrac{(1 + m\Delta t)^{n + 1} - 1}{(1 + m\Delta t) - 1} = \dfrac{\tau\Delta t}{2m}\big[(1 + m\Delta t)^{n + 1} - 1\big].
\end{split}
\end{equation}
Combining \eqref{eq:41} and the well-known inequality $e^x \geq x + 1$ for $x \geq 0$ we obtain
\begin{equation}\label{eq:42}
e_{n + 1} \leq \dfrac{\tau\Delta t}{2m}\big[e^{m(n + 1)t_n} - 1\big] = \dfrac{\tau\Delta t}{2m}\big[e^{mt_{n + 1}} - 1\big].
\end{equation}
This is the desired conclusion. The proof is complete.
\end{proof}
By the estimate \eqref{eq:42} we obtain the following result.
\begin{corollary}
The following estimate holds for the NSFD scheme \eqref{eq:23}
\begin{equation*}
|S(t_n) - S_n| + |I(t_n) - I_n| + |R(t_n) - R_n| \leq \dfrac{\tau\Delta t}{2m}\big[e^{mt_{n}} - 1\big]
\end{equation*}
for $n = 0, 1, 2, \ldots$.
\end{corollary}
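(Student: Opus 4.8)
The plan is to read the claim off directly from the a priori error bound already established in the course of proving Theorem~\ref{theorem6}, so that essentially nothing new needs to be estimated. Recall that in that proof one sets $e_n^S = S(t_n) - S_n$, $e_n^I = I(t_n) - I_n$, $e_n^R = R(t_n) - R_n$ and $e_n = |e_n^S| + |e_n^I| + |e_n^R|$, so that $e_n$ is exactly the quantity on the left-hand side of the statement. Hence the proof is purely a matter of transcribing the inequality \eqref{eq:42} and checking the index bookkeeping.

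First I would invoke \eqref{eq:42}, which asserts $e_{n+1} \leq \dfrac{\tau\Delta t}{2m}\bigl[e^{m t_{n+1}} - 1\bigr]$ for every $n \geq 0$; equivalently $e_k \leq \dfrac{\tau\Delta t}{2m}\bigl[e^{m t_k} - 1\bigr]$ for all $k \geq 1$. Substituting the definition of $e_k$ then gives
\[
|S(t_k) - S_k| + |I(t_k) - I_k| + |R(t_k) - R_k| \leq \dfrac{\tau\Delta t}{2m}\bigl[e^{m t_k} - 1\bigr]
\]
for every positive index $k$, which is precisely the asserted bound for $n \geq 1$.

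It then remains only to dispose of the case $n = 0$. Since the NSFD scheme \eqref{eq:23} is initialised with the same data as the continuous problem \eqref{eq:3}, we have $e_0^S = e_0^I = e_0^R = 0$, so the left-hand side vanishes; on the other hand $t_0 = 0$ gives $e^{m t_0} - 1 = 0$, so the right-hand side vanishes as well, and the inequality (in fact an equality) holds trivially. Combining this with the previous paragraph completes the proof.

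I do not anticipate any genuine obstacle: the statement is a literal corollary of \eqref{eq:42}, and the only point requiring a moment's attention is confirming that the convention $e_0 = 0$ — that is, that the numerical and exact solutions share the same starting value — makes the bound valid at $n = 0$ too.
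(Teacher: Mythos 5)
Your proposal is correct and matches the paper's argument: the corollary is obtained by simply reading off the estimate \eqref{eq:42} (with the reindexing $e_{n+1}\le \frac{\tau\Delta t}{2m}[e^{mt_{n+1}}-1]$ rewritten for a general index) together with the convention $e_0=0$ from the definition of convergence. Your explicit check of the $n=0$ case is a harmless extra detail not spelled out in the paper.
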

\section{Numerical experiments}\label{sec4}
In this section, we report some numerical examples to support the theoretical results. For this purpose,  we consider the model \eqref{eq:1} with the incidence function
\begin{equation*}
f(S, I) = \dfrac{\alpha SI}{1 + aS + bI + cSI},
\end{equation*}
where $\alpha, a, b, c > 0$. More precisely, the model under consideration is
\begin{equation}\label{eq:45}
\begin{split}
&\dfrac{dS}{dt} = \Lambda - \dfrac{\alpha SI}{1 + aS + bI + cSI} - \big(\mu_0 + \nu\big)S,\\
&\dfrac{dI}{dt} = \dfrac{\alpha SI}{1 + aS + bI + cSI} - \big(\mu_0 + \mu_1 + \beta \big)I,\\
&\dfrac{dR}{dt} = \beta I + \nu S - \mu_0 R.
\end{split}
\end{equation}
The basic reproduction number of the model \eqref{eq:45} is given by
\begin{equation*}
\mathcal{R}_0 = \dfrac{\alpha\Lambda}{(\mu_0 + \mu_1 + \beta)(\mu_0 + \nu + a\Lambda)}.
\end{equation*}
\begin{example}[Dynamics of the NSFD scheme and standard ones]\label{example1}
Consider the model \eqref{eq:45} with following parameters. In Table \ref{table1}, the term "GAS" stands for the globally asymptotically stable equilibrium point. 
\begin{table}[H]
\begin{center}
\caption{The parameters used in Example \ref{example1}}\label{table1}
\begin{tabular}{ccccccccccccccc}
\hline
Parameter&Value&Source&Parameter&Value&Source&GAS\\
\hline
$\Lambda$&$0.8$&Assumed&$\mu_0$&$0.000232$&\cite{Martin}&$E^0 = (649.35,\, 0,\,   2798.90)$\\
$\alpha$&$0.5$&Assumed&$\nu$&$0.001$&Assumed\\
$a$&$0.8$&Assumed&$\mu_1$&$0.0000547$&\cite{MMWR}\\
$b$&$0.9$&Assumed&$\beta$&$0.8$&Assumed\\
$c$&$0.95$&Assumed&$\mathcal{R}_0$&$0.7795$&Computed\\
\hline
\end{tabular}
\end{center}
\end{table}
\end{example}
We use the standard Euler scheme, the second-order Runge-Kutta (RK2) scheme and the NSFD scheme \eqref{eq:23} with $\varphi(\Delta t) = \Delta t$ to solve the model with initial data $(S(0), I(0), R(0)) = (500, 100, 2000)$. The numerical solutions generated by these schemes with various step sizes are depicted in Figures \ref{fig:1}-\ref{fig:9}.\par
From the obtained results we see that the approximations  generated by the Euler and RK2 schemes are negative and unstable. Hence, the positivity, boundedness and stability of the HBV model are destroyed. The similar result can be found in previous works on NSFD schemes for differential equations \cite{Dang1, Dang2, Dang3, Dang4, Dang5, Dang6, Hoangnew, Hoang1}. Conversely, the approximations generated by the NSFD scheme preserve the essential qualitative properties of the continuous model regardless of the chosen step sizes. In particular, the results in Figures \ref{fig:7}-\ref{fig:9} indicate that the dynamics of the NSFD scheme does not depend on the step sizes. This can be explained by the theoretical analyses constructed in Section \ref{sec3}.
\newpage
\begin{figure}[H]
\centering
\includegraphics[height=10cm,width=18cm]{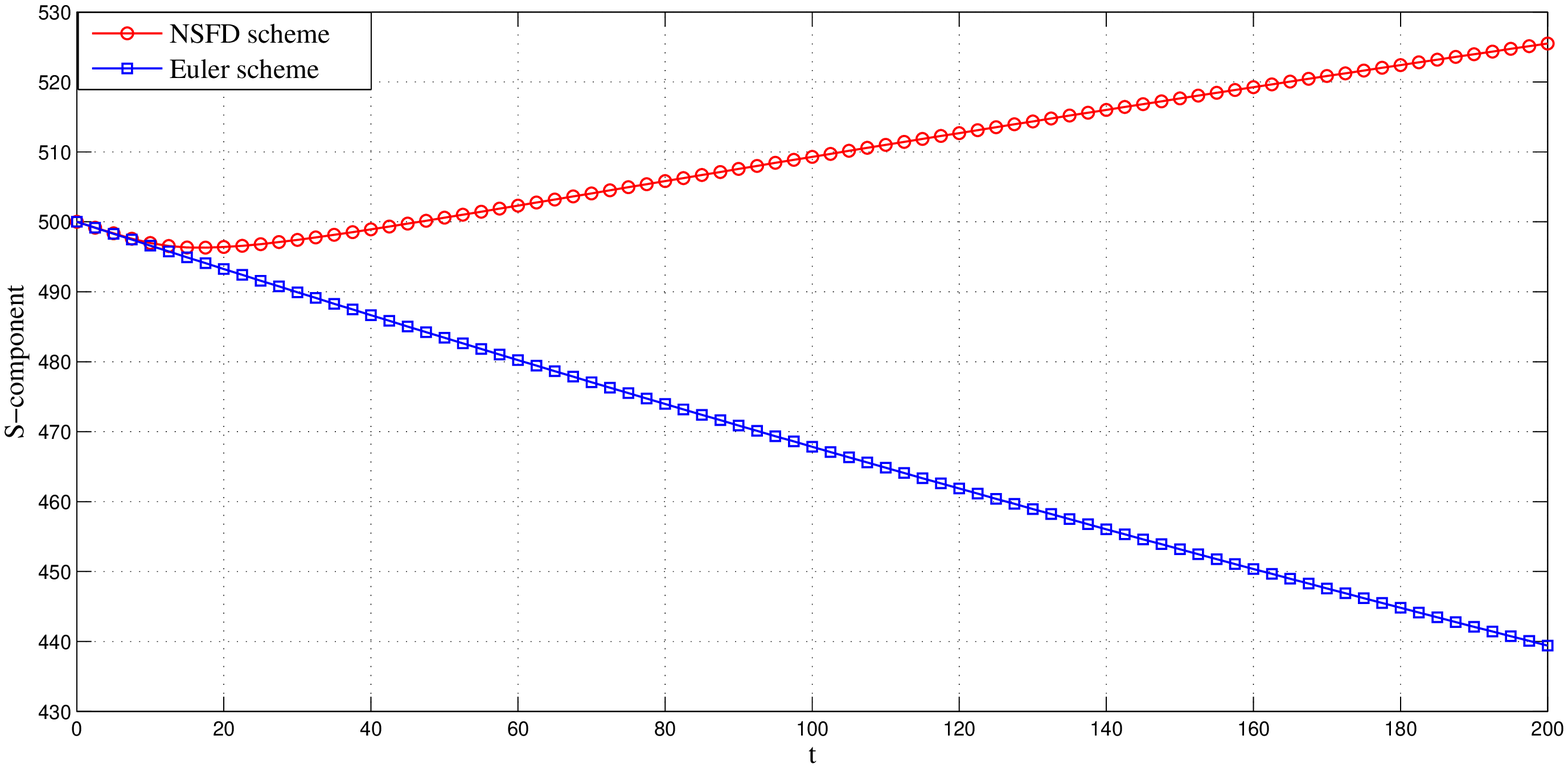}
\caption{The S-component generated by the Euler and NSFD schemes with $\Delta t = 2.5$ after $80$ iterations.}\label{fig:1}
\end{figure}
\begin{figure}[H]
\centering
\includegraphics[height=10cm,width=18cm]{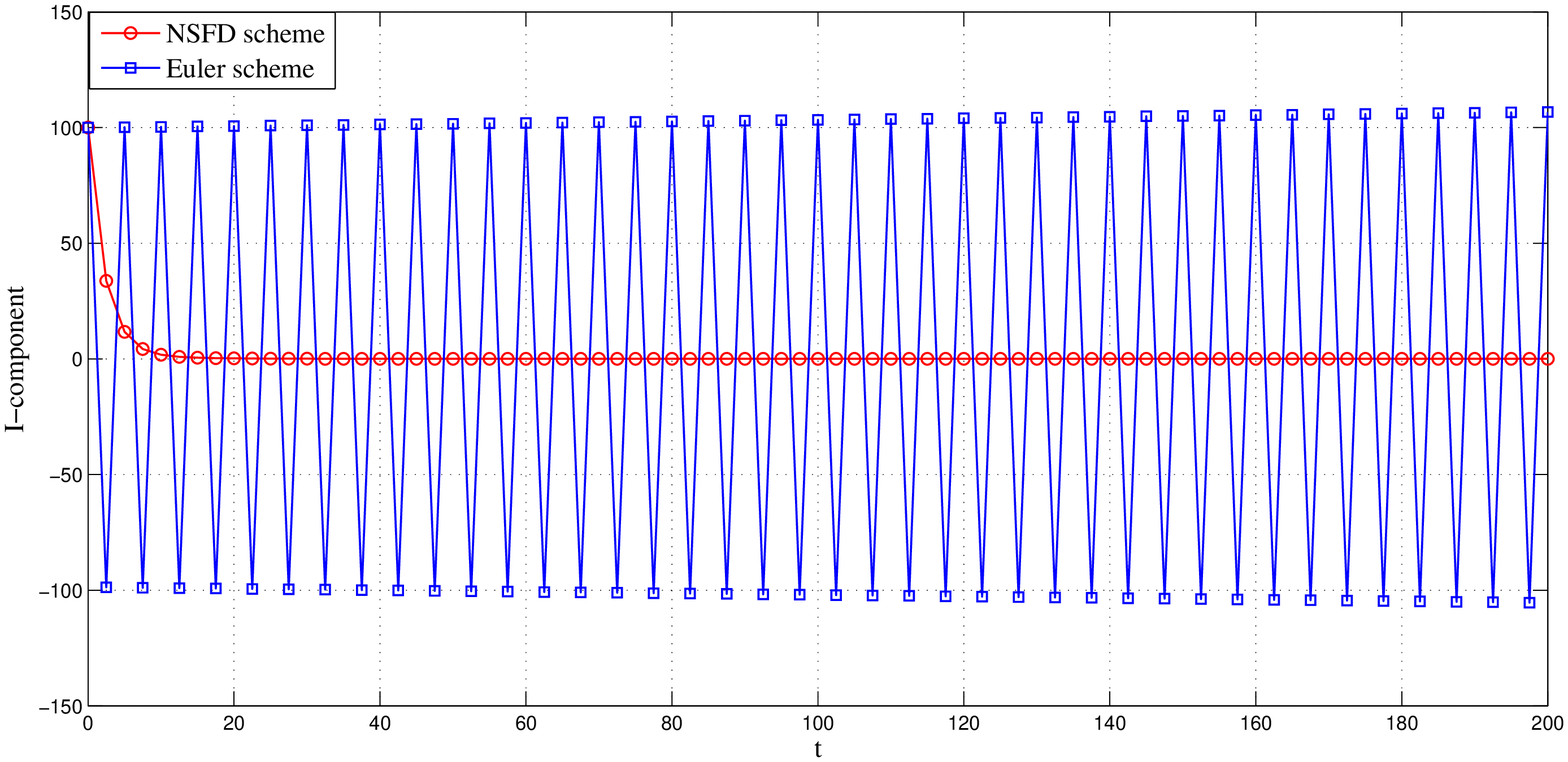}
\caption{The I-component generated by the Euler and NSFD schemes with $\Delta t = 2.5$ after $80$ iterations.}\label{fig:2}
\end{figure}
\begin{figure}[H]
\centering
\includegraphics[height=10cm,width=18cm]{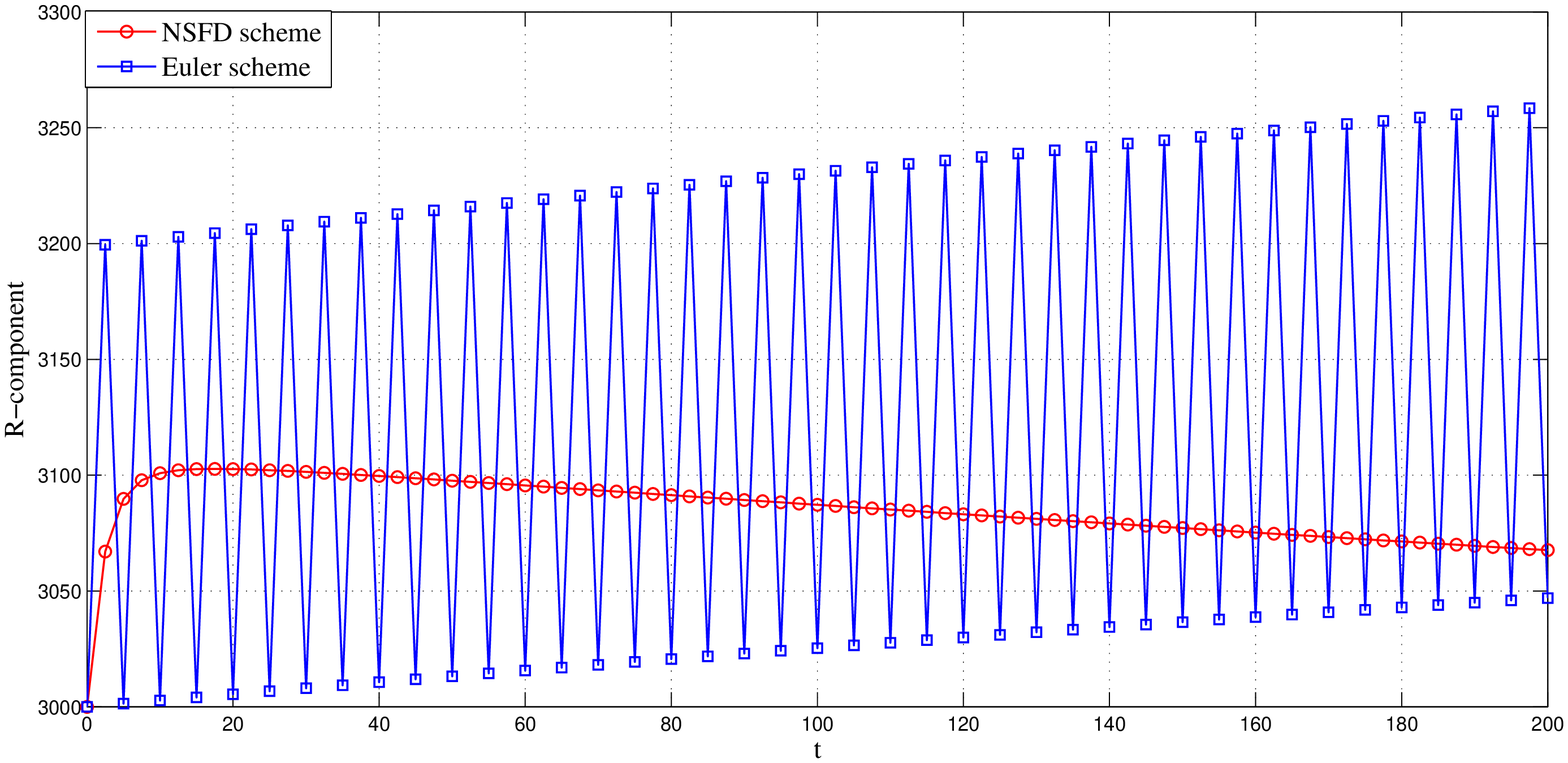}
\caption{The R-component generated by the Euler and NSFD schemes with $\Delta t = 2.5$ after $80$ iterations.}\label{fig:3}
\end{figure}
\begin{figure}[H]
\centering
\includegraphics[height=10cm,width=18cm]{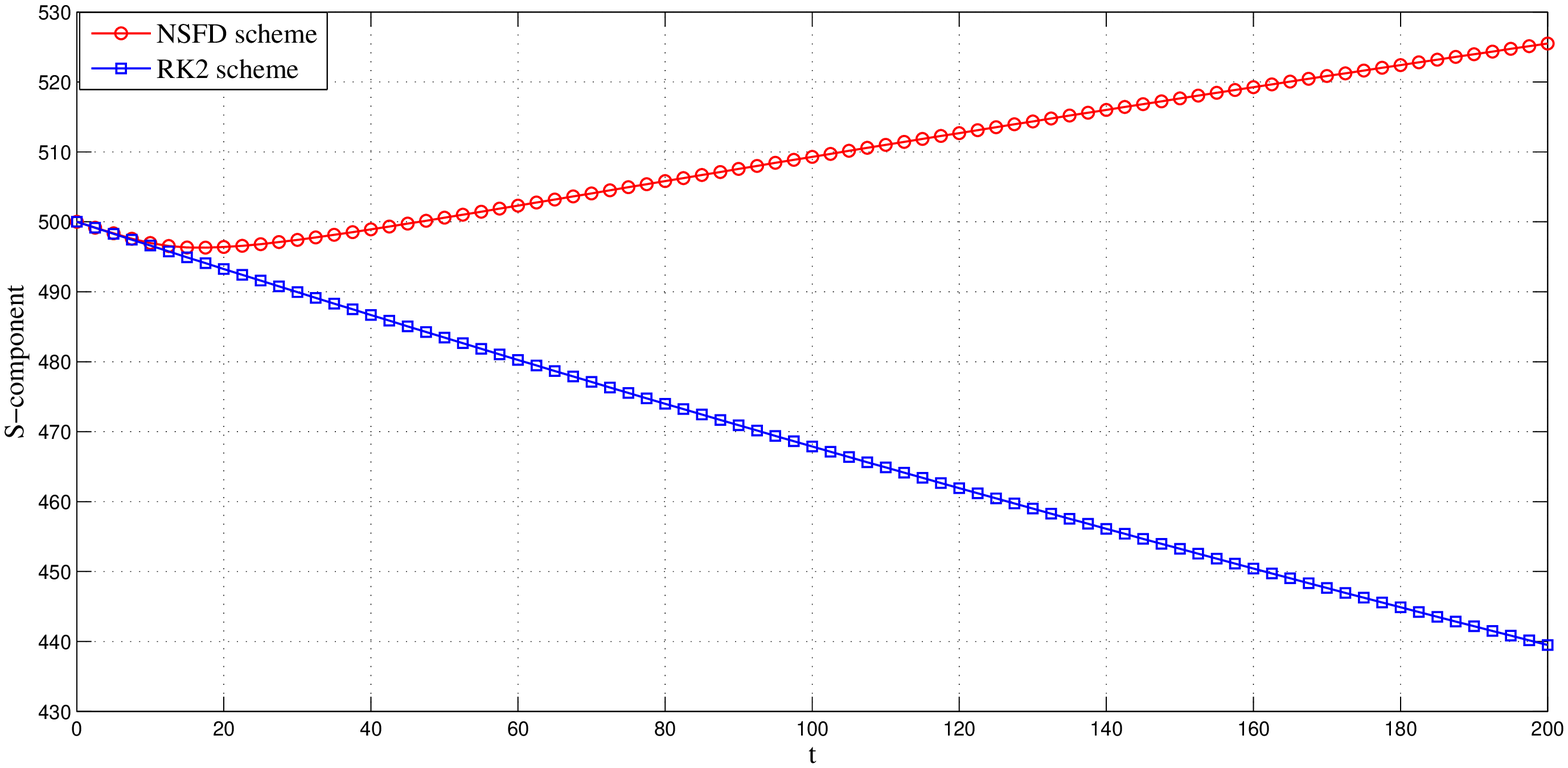}
\caption{The S-component generated by the RK2 and NSFD schemes with $\Delta t = 2.5$ after $80$ iterations.}\label{fig:4}
\end{figure}
\begin{figure}[H]
\centering
\includegraphics[height=10cm,width=18cm]{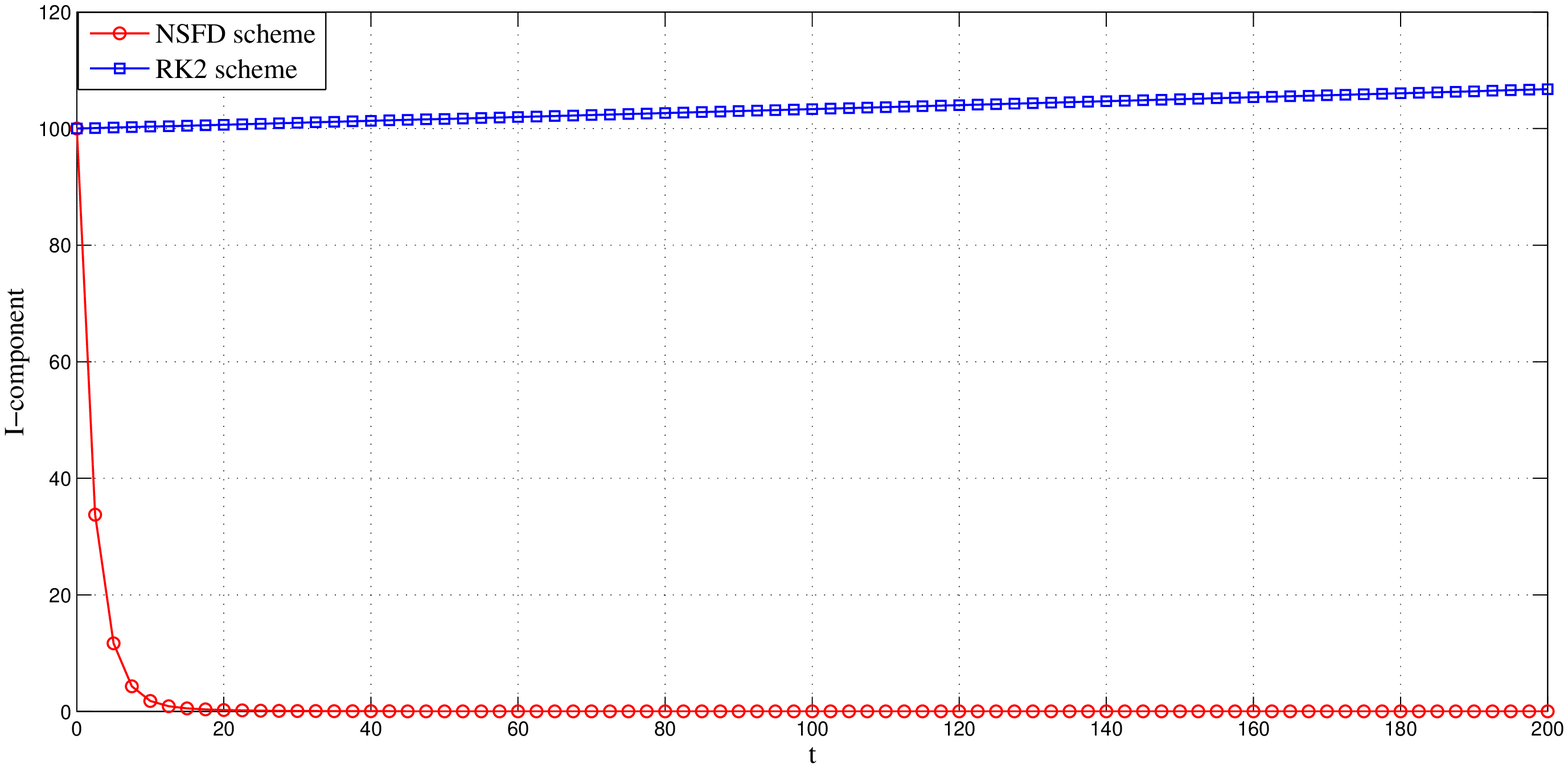}
\caption{The I-component generated by the RK2 and NSFD schemes with $\Delta t = 2.5$ after $80$ iterations.}\label{fig:5}
\end{figure}
\begin{figure}[H]
\centering
\includegraphics[height=10cm,width=18cm]{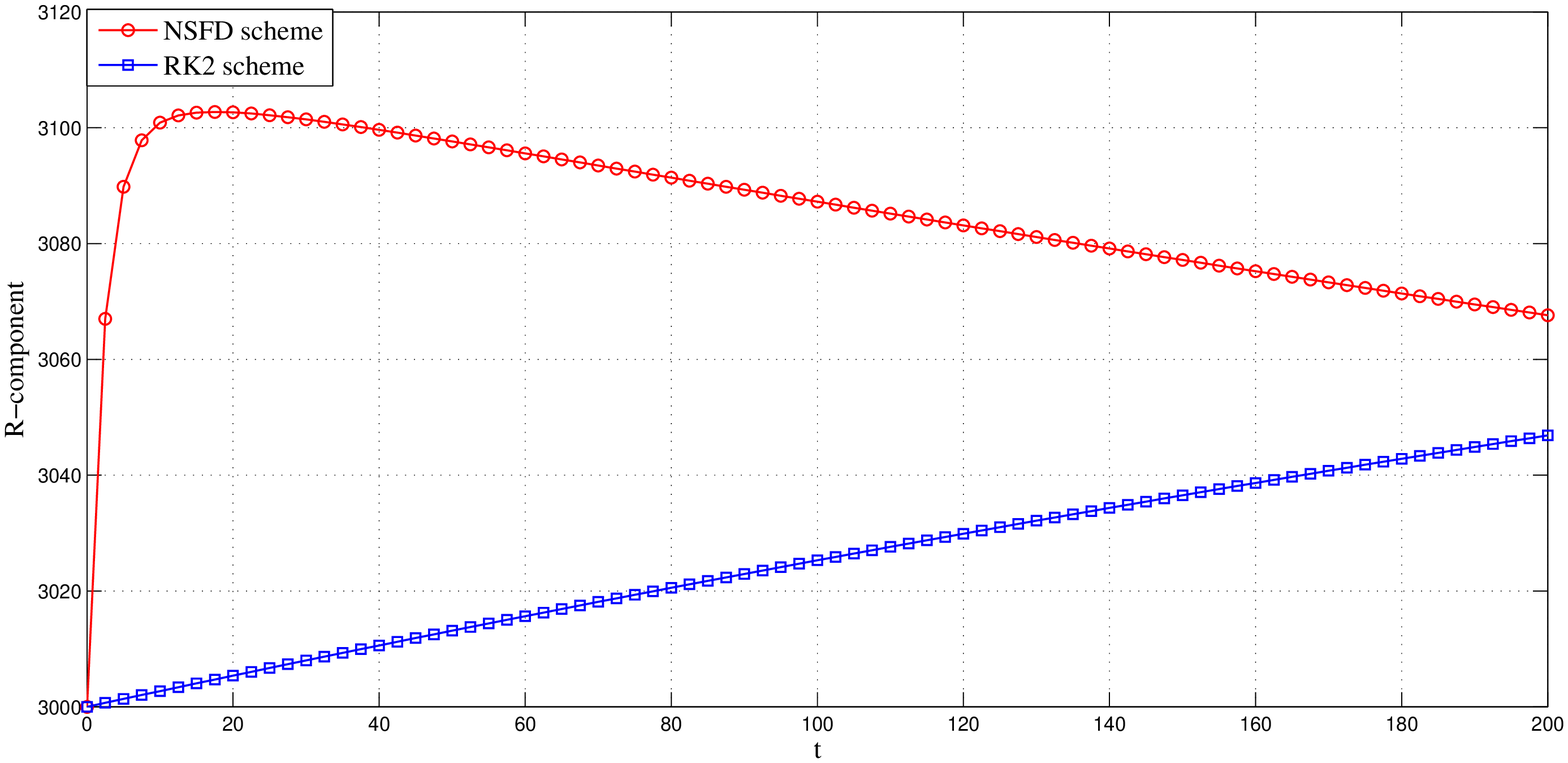}
\caption{The R-component generated by the RK2 and NSFD schemes with $\Delta t = 2.5$ after $80$ iterations.}\label{fig:6}
\end{figure}
\begin{figure}[H]
\centering
\includegraphics[height=10cm,width=18cm]{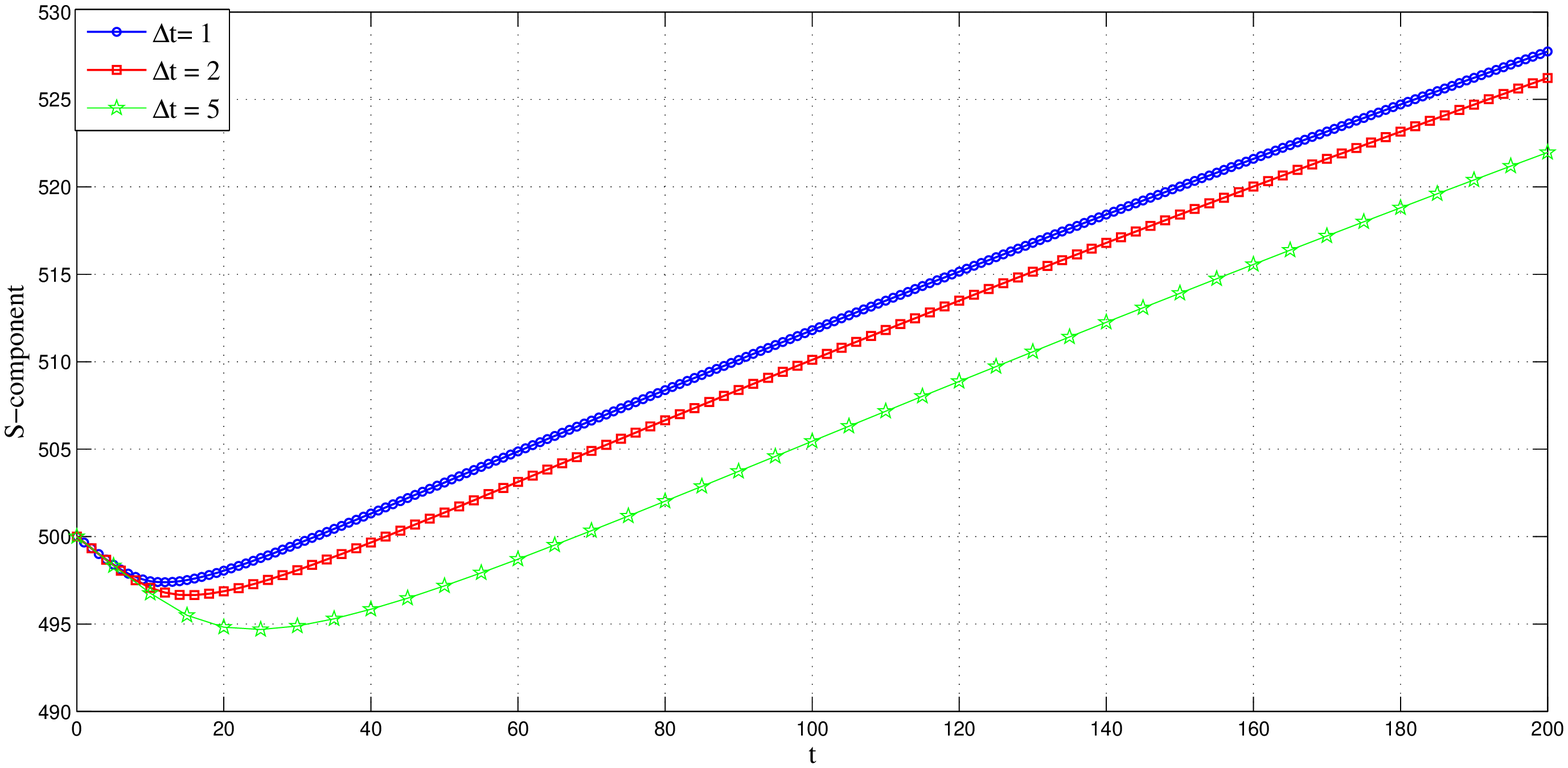}
\caption{The S-component generated by the NSFD scheme with $\Delta  t= 1.0$, $\Delta t = 2.0$ and $\Delta t = 5.0$.}\label{fig:7}
\end{figure}
\begin{figure}[H]
\centering
\includegraphics[height=10cm,width=18cm]{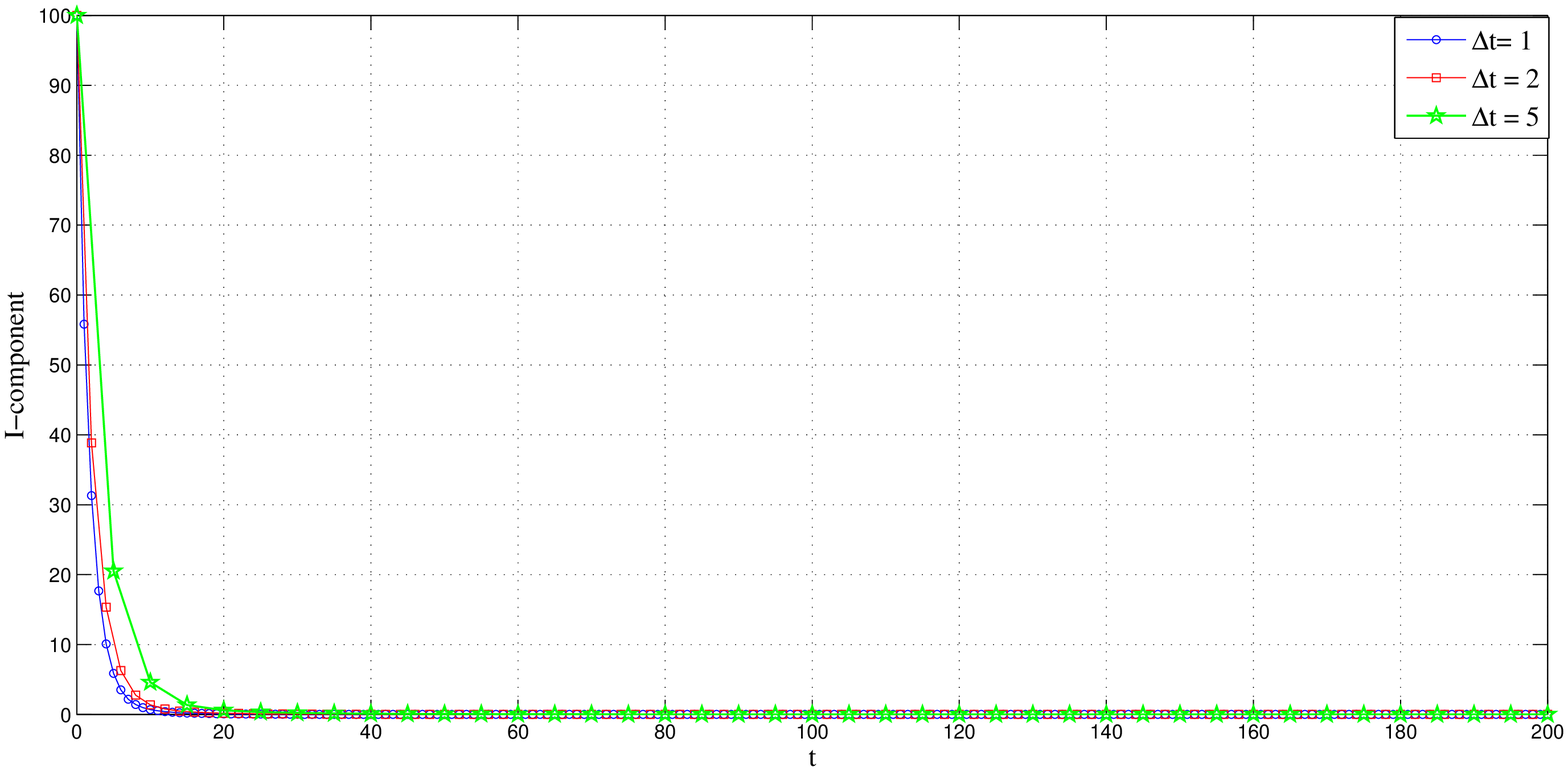}
\caption{The I-component generated by the NSFD scheme with $\Delta  t= 1.0$, $\Delta t = 2.0$ and $\Delta t = 5.0$.}\label{fig:8}
\end{figure}
\begin{figure}[H]
\centering
\includegraphics[height=10cm,width=18cm]{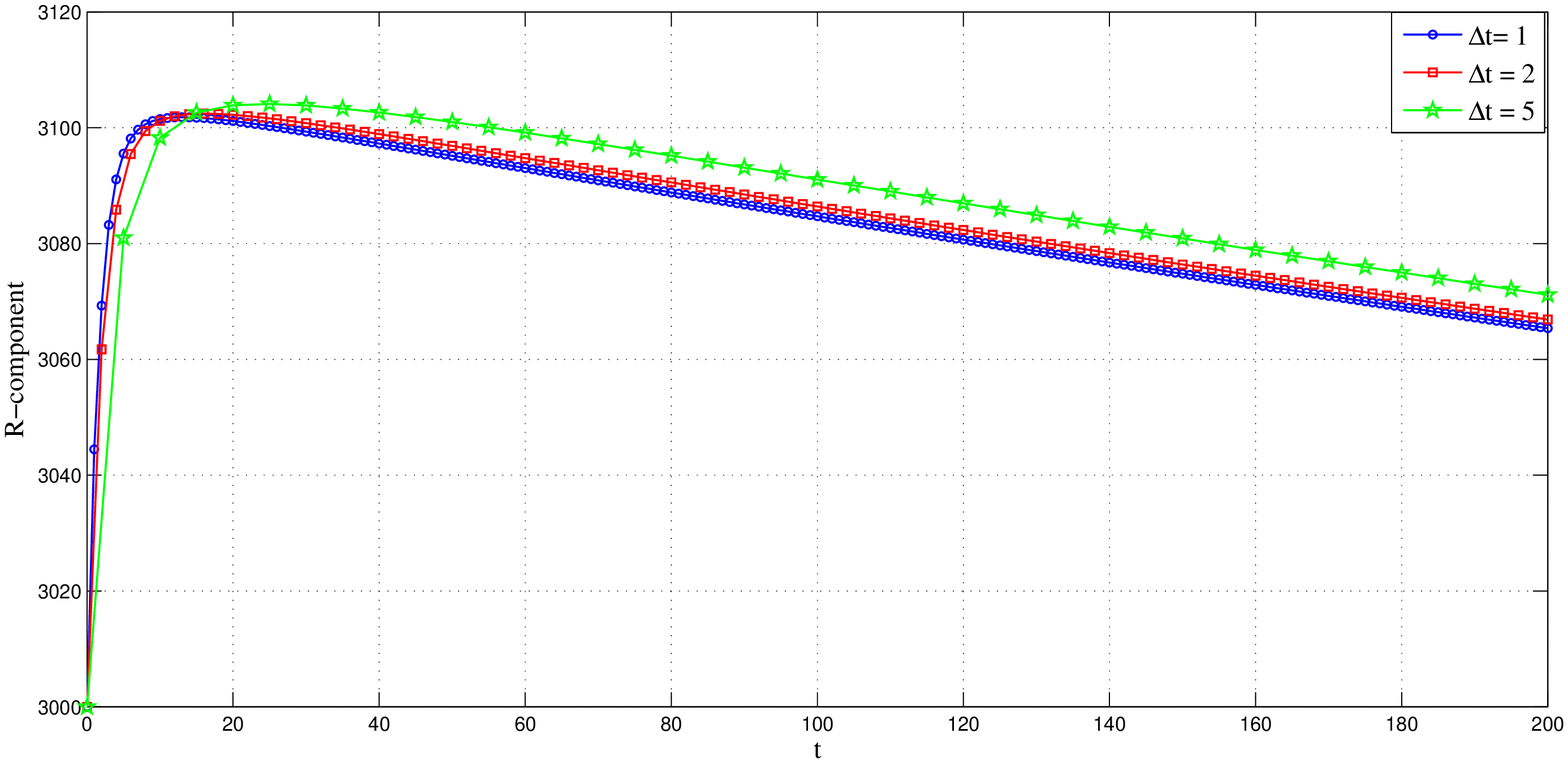}
\caption{The R-component generated by the NSFD scheme with $\Delta  t= 1.0$, $\Delta t = 2.0$ and $\Delta t = 5.0$.}\label{fig:9}
\end{figure}
\begin{example}[Dynamics of the HBV model when $\mathcal{R}_0 < 1$]\label{example2}
The aim of this example is to examine the dynamics of the model \eqref{eq:3} when $\mathcal{R}_0 < 1$. For this purpose, consider the model \eqref{eq:3} with following parameters.
\begin{table}[H]
\begin{center}
\caption{The parameters used in Example \ref{example2}}\label{table2}
\begin{tabular}{ccccccccccccccc}
\hline
Parameter&Value&Source&Parameter&Value&Source&GAS\\
\hline
$\Lambda$&$0.232$&\cite{Martin}&$\mu_0$&$0.000232$&\cite{Martin}&$E^0 = (126.64,\, 0,\,    873.36)$\\
$\alpha$&$0.0009$&\cite{Martin}&$\nu$&$0.0016$&Assumed\\
$a$&$0.25$&Assumed&$\mu_1$&$0.0000547$&\cite{MMWR}\\
$b$&$0.50$&Assumed&$\beta$&$0.25$&Assumed\\
$c$&$0.75$&Assumed&$\mathcal{R}_0$&$0.0139$&Computed\\
\hline
\end{tabular}
\end{center}
\end{table}
\end{example}
In this case, the DFE point $E^0$ is globally asymptotically stable. We use the NSFD scheme \eqref{eq:3} with $\varphi(\Delta t) = \Delta t$ and $\Delta t = 10^{-4}$ to simulate the dynamics of the HBV model. The obtained phase spaces are depicted in Figure \ref{fig:10}. In this figure, each blue curve represents a phase space corresponding to a specific initial data, the red circle indicates the location of the DFE point and the green arrows describe the evolution of the HBV model. It is clear that the GAS of $E^0$ as well as the dynamical properties of the HBV model are confirmed. 
\begin{figure}[H]
\centering
\includegraphics[height=12cm,width=15cm]{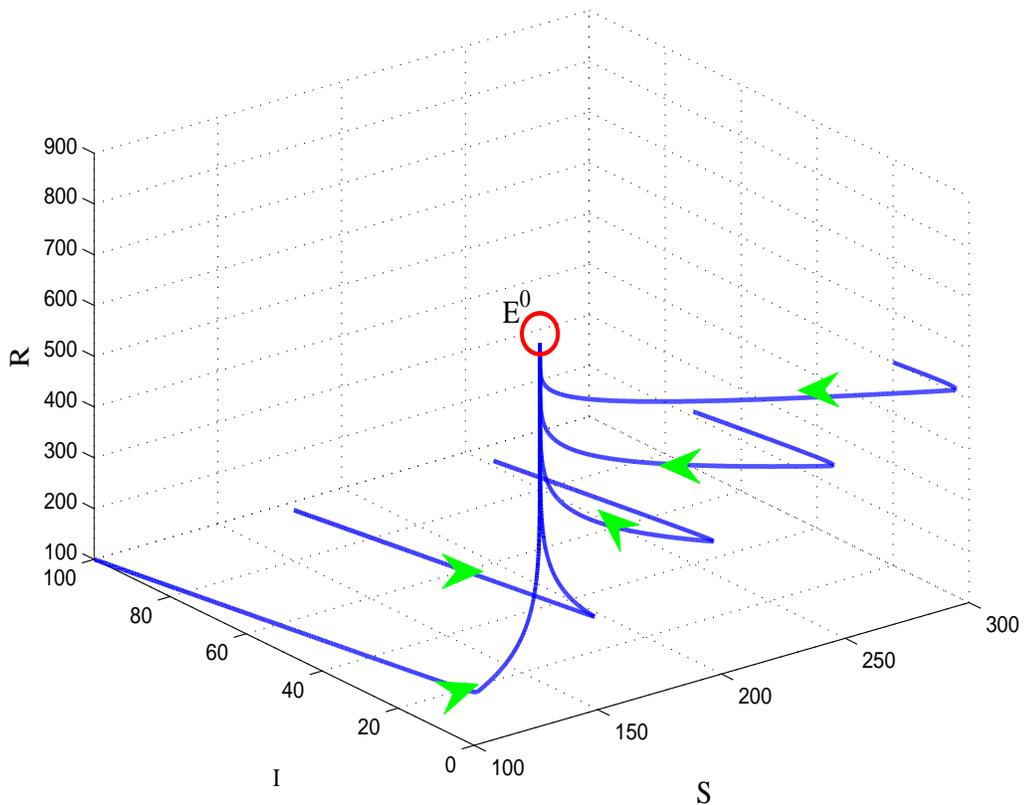}
\caption{The phase spaces of the HBV in Example \ref{example2}.}\label{fig:10}
\end{figure}

\begin{example}[Dynamics of the HBV model when $\mathcal{R}_0 > 1$]\label{example3}
In this example, we investigate the dynamics of the model \eqref{eq:3} when $\mathcal{R}_0 > 1$. For this reason, consider the model \eqref{eq:3} with following parameters.
\begin{table}[H]
\begin{center}
\caption{The parameters used in Example \ref{example3}}\label{table3}
\begin{tabular}{ccccccccccccccc}
\hline
Parameter&Value&Source&Parameter&Value&Source&GAS\\
\hline
$\Lambda$&$0.2$&Assumed&$\mu_0$&$0.000232$&\cite{Martin}&$E^* = (19.60, \,6.95,\,    1004.80)$\\
$\alpha$&$0.005$&Assumed&$\nu$&$0.0016$&Assumed\\
$a$&$0.5$&Assumed&$\mu_1$&$0.0000547$&\cite{MMWR}\\
$b$&$0.1$&Assumed&$\beta$&$0.025$&Assumed\\
$c$&$0.2$&Assumed&$\mathcal{R}_0$&$0.0139$&Computed\\
\hline
\end{tabular}
\end{center}
\end{table}
\end{example}
In this case, the DEE point $E^*$ is globally asymptotically stable. We use the NSFD scheme \eqref{eq:3} with $\varphi(\Delta t) = t$ and $t = 10^{-4}$ to solve the HBV model. The obtained phase spaces are depicted in Figure \ref{fig:11}. Similarly to Example \ref{example2}, the GAS of $E^0$ as well as the dynamical properties of the model are shown.
\begin{figure}[H]
\centering
\includegraphics[height=12cm,width=15cm]{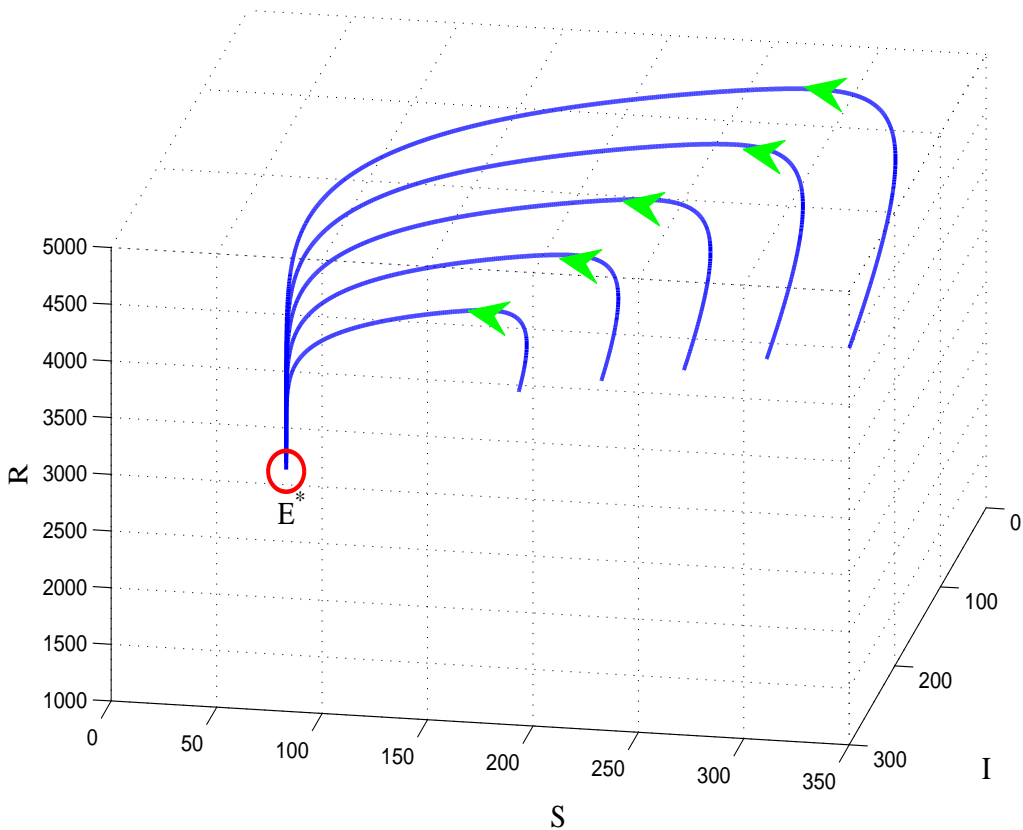}
\caption{The phase spaces of the HBV in Example \ref{example3}.}\label{fig:11}
\end{figure}
\section{Conclusions and open problems}\label{sec5}
In this work, we have studied dynamics of a generalized hepatitis B epidemic model and its dynamically consistent NSFD model. The positivity, boundedness, the basic reproduction number and asymptotic stability properties of the model have been analyzed rigorously. It was proved, by the Lyapunov stability theory and the Poincare-Bendixson theorem in combination with the Bendixson-Dulac criterion, that the DFE point is globally asymptotically stable if the basic reproduction number $\mathcal{R}_0 \leq 1$ and the DEE point is globally asymptotically stable whenever $\mathcal{R}_0 > 1$. Besides, the Mickens’ methodology was applied to formulate a dynamically consistent NSFD model for the continuous model. By rigorously mathematical analyses we have shown that the constructed NSFD scheme preserves essential mathematical features of the continuous model for all finite step sizes. Finally, the numerical experiments are conducted to illustrate the theoretical findings and to demonstrate advantages of the NSFD scheme over standard ones. The obtained results provided important improvements for the ones presented in \cite{Khan} and \cite{Suryanto}.\par
Based on the approach in \cite{Khan}, we can also use the optimal control strategy for the model \eqref{eq:3} to eliminate the spreading of the HBV. This is a very interesting and important problem with many useful applications in real-world applications.\par
Although the constructed NSFD scheme is only convergent of order $1$, its main advantage is that it can preserve the essential properties of the continuous model for all finite step sizes. On the other hand, it can be easily combined with extrapolation techniques or variable step-size strategies to improve the accuracy. The most important thing is that it can be operate well regardless of step sizes.\par
In the near future, optimal control strategies and high-order NSFD schemes for the model \eqref{eq:3} and its extensions will be studied. Additionally, the practice applications of the HBV model \eqref{eq:3}  will be also considered.
\section*{References}


\begin{thebibliography}{00}
\bibitem{Adekanye}
O. Adekanye, T. Washington, Nonstandard finite difference scheme for a Tacoma Narrows Bridge model, Applied Mathematical Modelling 62(2018) 223-236.
\bibitem{Ahmad}
S. Ahmad, M. Rahman, M. Arfan, On the analysis of semi-analytical solutions of Hepatitis B epidemic model under the Caputo-Fabrizio operator, Chaos, Solitons \& Fractals 146 (2021) 110892.
\bibitem{Allen}
L. J. S. Allen, An introduction to mathematical biology, Pearson, 2006.
\bibitem{Allen1}
L. J. S. Allen, P. van den Driessche, The basic reproduction number in some discrete-time epidemic models, Journal of Difference Equations and Applications 14(2008) 1127-1147.
\bibitem{Arenas}
A. J. Arenas, G. Gonzalez-Parra, B. M. Chen-Charpentier, Construction of nonstandard finite difference schemes for the SI and SIR epidemic models of fractional order, Mathematics and Computers in Simulation 121 (2016) 48-63. 
\bibitem{Ascher}
U. M. Ascher, L. R. Petzold, Computer Methods for Ordinary Differential Equations and Differential-Algebraic Equations,  Society for Industrial and Applied Mathematics, Philadelphia, 1998.
\bibitem{Bonhoeffer}
S. Bonhoeffer, R. M. May, G. M. Shaw, M. A. Nowak, Virus dynamics and drug therapy. Proc. Natl Acad. Sci. USA 94(1997) 6971-6976.
\bibitem{Brauer}
F. Brauer, The Kermack-McKendrick epidemic model revisited, Mathematical Biosciences 198(2005)119-131.
\bibitem{Brauer1}
F. Brauer, Mathematical epidemiology: Past, present, and future,  Infectious Disease Modelling  2(2017)113-127.
\bibitem{Calatayud}
J. Calatayud, M. Jornet, An improvement of two nonstandard finite difference schemes for two population mathematical models, Journal of Difference Equations and Applications 27(2021)422-430.
\bibitem{Cardoso}
L. C. Cardoso, R. F. Camargo, F. L. P. dos Santos, J. P. C. D. Santos, Global stability analysis of a fractional differential system in hepatitis B, Chaos, Solitons \& Fractals 143(2021) 110619.
\bibitem{Cardoso1}
L. C. Cardoso, F. L. P. Dos Santos, R. F. Camargo, Analysis of fractional-order models for hepatitis B, Computational and Applied Mathematics 37(2018) 457-4586.
\bibitem{Cresson}
J. Cresson, F. Pierret, {Non standard finite difference scheme preserving dynamical properties},  Journal of Computational and Applied Mathematics 303(2016) 15-30.
\bibitem{Danane}
J. Danane, K. Allali, Z. Hammouch, Mathematical analysis of a fractional differential model of HBV infection with antibody immune response, Chaos, Solitons \& Fractals 136(2020) 109787.
\bibitem{Dang1}
Quang A Dang, Manh Tuan Hoang, Positivity and global stability preserving NSFD schemes for a mixing propagation model of computer viruses, Journal of Computational and Applied Mathematics
374(2020) 112753.
\bibitem{Dang2}
Quang A Dang, Manh Tuan Hoang, Nonstandard finite difference schemes for a general predator-prey system, Journal of Computational Science 36(2019) 101015.
\bibitem{Dang3}
Quang A Dang, Manh Tuan Hoang, Dynamically consistent discrete metapopulation model, Journal of Difference Equations and Applications 22 (2016) 1325-1349.
\bibitem{Dang4}
Quang A Dang, Manh Tuan Hoang, Lyapunov direct method for investigating stability of nonstandard finite difference schemes for metapopulation models, Journal of Difference Equations and Applications 24(2019) 15-47.
\bibitem{Dang5}
Quang A Dang, Manh Tuan Hoang, Complete global stability of a metapopulation model and its dynamically consistent discrete models, Qualitative theory of dynamical systems 18 (2019) 461-475.
\bibitem{Dang6}
Quang A Dang, Manh Tuan Hoang, Positive and elementary stable explicit nonstandard Runge-Kutta methods for a class of autonomous dynamical systems, International Journal of Computer Mathematics 97 (2020) 2036-2054.
\bibitem{Din}
A. Din, Y. Li, Q. Liu, Viral dynamics and control of hepatitis B virus (HBV) using an epidemic model, Alexandria Engineering Journal 59(2020) 667-679.
\bibitem{Din1}
A. Din, Y. Li, A. Yusuf, Delayed hepatitis B epidemic model with stochastic analysis, Chaos, Solitons \& Fractals 146 (2021) 110839.
%
\bibitem{Din2}
W. D. Qin, Q. Ma, Z. Y. Man, X. H. Ding, A boundedness and monotonicity preserving method for a generalized population model, Journal of Difference Equations and Applications  26(2020)1347-1368.
\bibitem{Gao}
F. Gao, X. Li, W. Li, X. Zhou, Stability analysis of a fractional-order novel hepatitis B virus model with immune delay based on Caputo-Fabrizio derivative, Chaos, Solitons \& Fractals 142(2021) 110436.
\bibitem{Garba}
S. M. Garba, A. B. Gumel, A. S. Hassan, J. M. -S. Lubuma, Switching from exact scheme to nonstandard finite difference scheme for linear delay differential equation, Applied Mathematics and Computation
258(2015) 388-403.
\bibitem{Gupta}
M. Gupta, J. M. Slezak, F. Alalhareth, S. Roy, H. V. Kojouharov, Second-order Nonstandard Explicit Euler Method, AIP Conference Proceedings 2302(2020) 110003.
\bibitem{Grassly}
N. C. Grassly, C. Fraser, Mathematical models of infectious disease transmission, Nature Reviews Microbiology 6(2008) 477-487.
\bibitem{Hethcote}
H. W. Hethcote, The mathematics of infectious diseases. SIAM Rev. 42((2000) 599-653.
\bibitem{Hoang}
M. T. Hoang, O. F.  Egbelowo O.F, Dynamics of a Fractional-Order Hepatitis B Epidemic Model and Its Solutions by Nonstandard Numerical Schemes. In: Hattaf K., Dutta H. (eds) Mathematical Modelling and Analysis of Infectious Diseases. Studies in Systems, Decision and Control, vol 302. Springer, Cham. https://doi.org/10.1007/978-3-030-49896-2\_5.
\bibitem{Hoang1}
M. T. Hoang, O.F. Egbelowo, On the global asymptotic stability of a hepatitis B epidemic model and its solutions by nonstandard numerical schemes, Boletín de la Sociedad Matemática Mexicana 26(2020) 1113-1134.
\bibitem{Hoangnew}
Manh Tuan Hoang, Reliable approximations for a hepatitis B virus model by nonstandard numerical schemes, Mathematics and Computers in Simulation 193(2022) 32-56.
\bibitem{Hoang1}
Manh Tuan Hoang, A. M. Nagy, Uniform asymptotic stability of a Logistic model with feedback control of fractional order and nonstandard finite difference schemes, Chaos, Solitons \& Fractals 123(2019) 24-34.
\bibitem{Horvath}
Z. Horv\'ath, Positivity of Runge-Kutta and diagonally split Runge-Kutta methods, Applied Numerical Mathematics 28(1998) 309-326.

\bibitem{Karaji}
P. T. Karaji, N. Nyamoradi, Analysis of a fractional SIR model with General incidence function, Applied Mathematics Letters 108(2020) 106499.
\bibitem{Kermack}
W. O. Kermack, A. G. McKendrick, A Contribution to the Mathematical Theory of Epidemics, Proceedings of the Royal Society of London, Series A 115(1927) 700-721.
\bibitem{Khan}
T. Khan, Z. Ullah, N. Ali, G. Zaman, Modeling and control of the hepatitis B virus spreading using an epidemic model, Chaos, Solitons and Fractals 124 (2019) 1-9.
\bibitem{Khan1}
A. Khan, G. Hussain,  M. Inc,  G. Zaman, Existence, uniqueness, and stability of fractional hepatitis B epidemic model, Chaos 30(2020) 103104.
\bibitem{Khan2}
T. Khan, A. Khan, G. Zaman, The extinction and persistence of the stochastic hepatitis B epidemic model, Chaos, Solitons \& Fractals 108(2018)  123-128.
\bibitem{Kojouharov}
H. V. Kojouharov, S. Roy, M. Gupta, F. Alalhareth, J. M. Slezak, A second-order modified nonstandard theta method for one-dimensional autonomous differential equations, Applied Mathematics Letters
112(2021) 106775.
\bibitem{LaSalle}
J. La Salle, S. Lefschetz, Stability by Liapunov’s Direct Method, Academic Press, New York, 1961.
%
\bibitem{Li}
M. Y. Li, An Introduction to Mathematical Modeling of Infectious Diseases, Springer International Publishing AG,  2018.
\bibitem{Lyapunov}
A. M. Lyapunov, The general problem of the stability of motion, International Journal of Control,
Taylor \& Francis, 1992.
\bibitem{Manna}
K. Manna, S. P. Chakrabarty, Global stability of one and two discrete delay models for chronic hepatitis B infection with HBV DNA-containing capsids, Computational and Applied Mathematics 36(2017) 525-536.
\bibitem{Martcheva}
M. Martcheva, An Introduction to Mathematical Epidemiology, Springer, New York, 2015.
\bibitem{Martin}
N. K. Martin, P. Vickerman, M. Hickman, Mathematical modelling of hepatitis C treatment for injecting drug users, Journal of Theoretical Biology 274(2011) 58-66.
\bibitem{McCluskey}
C. C. McCluskeya, Y. Yang, Global stability of a diffusive virus dynamics model with general
incidence function and time delay, Nonlinear Analysis: Real World Applications 25(2015) 64-78.
\bibitem{McNabb}
A. McNabb, Comparison theorems for differential equations, Journal of Mathematical Analysis and Applications 119(1986) 417-428.
\bibitem{Mickens1}
R. E. Mickens, Nonstandard Finite Difference Models of Differential Equations, World Scientific, 1993.
\bibitem{Mickens2}
R. E. Mickens, Applications of Nonstandard Finite Difference Schemes, World Scientific, 2000.
\bibitem{Mickens3}
R. E. Mickens, Advances in the Applications of Nonstandard Finite Difference Schemes, World Scientific, 2005.
\bibitem{Mickens4}
R. E. Mickens, Nonstandard Finite Difference Schemes for Differential Equations, Journal of Difference Equations and Applications 8(2002) 823-847.
\bibitem{Mickens5}
R. E. Mickens, Nonstandard Finite Difference Schemes: Methodology and Applications,  World Scientific, 2020.
\bibitem{Mickens6}
R. E. Mickens, I. H. Herron, Approximate rational solutions to the Thomas-Fermi equation based on dynamic consistency, Applied Mathematics Letters 116(2021)106994.
\bibitem{MMWR}
MMWR, Progress in hepatitis B prevention through universal infant vaccination China, 1997-2006, Morbidity and mortality weekly report 2007(18) 441-5 . 
\bibitem{Nowak}
M. A. Nowak, R. M. May, Virus Dynamics: Mathematical Principles of Immunology and Virology, New York: Oxford University Press 2000.
\bibitem{Patidar1}
K. C. Patidar, On the use of nonstandard finite difference methods, Journal of Difference Equations and Applications 11(2005) 735-758.
\bibitem{Patidar2}
K. C. Patidar, Nonstandard finite difference methods: recent trends and further developments, Journal of Difference Equations and Applications 22(2016) 817-849.
\bibitem{Shah}
S. A. A. Shah, M. A. Khan, M. Farooq, S. Ullah, E. O.Alzahrani, A fractional order model for Hepatitis B virus with treatment via Atangana-Baleanu derivative, Physica A: Statistical Mechanics and its Applications 538 (2020) 122636.
\bibitem{Suryanto}
A. Suryanto, I. Darti, On the nonstandard numerical discretization of SIR epidemic model with a
saturated incidence rate and vaccination, AIMS Mathematics 6(2020) 141-155.
%
\bibitem{Tian}
Y. Tian, X. Liu, Global dynamics of a virus dynamical model with general incidence rate and cure rate, Nonlinear Analysis: Real World Applications 16(2014) 17-26.
\bibitem{vdDriessche}
P. van den Driessche, J. Watmough, Reproduction numbers and sub-threshold endemic equilibria for compartmental models of disease transmission, Mathematical Biosciences 180(2002) 29-48.

%
\bibitem{Wood1}
D. T. Wood, H. V. Kojouharov, A class of nonstandard numerical methods for autonomous dynamical systems, Applied Mathematics Letters 50(2015) 78-82.
\bibitem{Wood2}
D. T. Wood, H. V. Kojouharov, D. T. Dimitrov, Universal approaches to approximate biological systems with nonstandard finite difference methods, Mathematics and Computers in Simulation 133(2017) 337-350.
\end{thebibliography}
\end{document}